\newcommand{\R}{{\mathbb R}}
\newcommand{\C}{{\mathbb C}}
\newcommand{\Z}{{\mathbb Z}}
\newcommand{\e}{{\epsilon}}
\DeclareMathOperator{\dist}{dist}
\DeclareMathOperator{\dv}{div}
\DeclareMathOperator{\supp}{supp}
\DeclareMathOperator{\cof}{cof}
\DeclareMathOperator{\re}{Re}
\DeclareMathOperator{\im}{Im}
\newcommand{\lt}{\left}
\newcommand{\rt}{\right}
\newcommand{\nn}{\nonumber}
\newcommand{\qd}{\quad}
\newcommand{\wt}{\widetilde}
\newcommand{\na}{\nabla}
\newtheorem{theorem}{Theorem}[section]
\newtheorem{lemma}[theorem]{Lemma}
\newtheorem{proposition}[theorem]{Proposition}
\theoremstyle{definition}
\theoremstyle{remark}
\newtheorem{remark}[theorem]{Remark}
\title{Quantitative rigidity of differential inclusions in two dimensions}
\date{}
\author{Xavier Lamy\footnote{Institut de Math\'ematiques de Toulouse, UMR 5219, Universit\'e de Toulouse, CNRS, UPS
		IMT, F-31062 Toulouse Cedex 9, France. Email: Xavier.Lamy@math.univ-toulouse.fr}
	\and Andrew Lorent\footnote{Department of Mathematical Sciences, University of Cincinnati, Cincinnati, OH 45221, USA. Email: lorentaw@uc.edu} 
	\and Guanying Peng\footnote{Department of Mathematical Sciences, Worcester Polytechnic Institute, Worcester, MA 01609, USA. Email: gpeng@wpi.edu}}
\begin{document}

\maketitle

\begin{abstract}
For any compact connected one-dimensional submanifold $K\subset \mathbb R^{2\times 2}$ which has no rank-one connection and is elliptic, 
we prove the quantitative rigidity estimate
\begin{align*}
\inf_{M\in K}\int_{B_{1/2}}| Du -M |^2\,dx  \leq C \int_{B_1} \dist^2(Du, K)\, dx,
\qquad\forall u\in H^1(B_1;\mathbb R^2).
\end{align*}
This is an optimal generalization, for compact connected submanifolds of $\mathbb R^{2\times 2}$, of the celebrated quantitative rigidity estimate  
of Friesecke, James and M\"uller for the approximate differential inclusion into $SO(n)$.
The proof relies on
the special properties of elliptic subsets $K\subset\R^{2\times 2}$ with respect to conformal-anticonformal decomposition, which  provide
a quasilinear elliptic PDE satisfied by solutions of the exact differential inclusion $Du\in K$.
We also give an example showing that no analogous result can hold true in $\mathbb R^{n\times n}$ for $n\geq 3$. 

\end{abstract}

\section{Introduction}

In 1850, Liouville \cite{LIO50} proved that, given a domain $\Omega\subset\R^3$, 
any smooth map $u\colon\Omega\to \R^3$ satisfying the differential inclusion $Du(x)\in \R_+ O(n)$ for all $x\in\Omega$ must be either affine or a M\"obius transform.
A corollary to Liouville’s Theorem
is that a $C^3$
function whose gradient belongs everywhere to $SO(n)$ is an affine mapping. 
This phenomenon of being able to 
globally control a 
map
satisfying a certain differential inclusion $Du\in K$
 is known as ``rigidity''.

In \cite{FJM02} Friesecke, James and M\"{u}ller solved a long standing open problem by proving an optimal 
quantitative rigidity 
estimate for $K=SO(n)$.
Specifically, they showed that for every bounded open connected Lipschitz domain $U\subset \mathbb{R}^n$,
$n\geq 2$, 
there exists a constant $C(U)$ such that, for $K = SO(n)$,
\begin{equation}\label{e:FJM}
\inf_{R\in K}  \| Dv - R\|_{L^2(U)} \leq C (U) \| \dist(Dv, K) \|_{L^2(U)},\qquad\forall
v\in H^{1}\lt(U;\mathbb{R}^n\rt).
\end{equation}
Here and below, $\dist(M, K)$ denotes the distance from a matrix $M\in \mathbb{R}^{n\times n}$ to a subset 
$K\subset \mathbb{R}^{n\times n}$ measured in the Euclidean norm. This result strengthened earlier work of a series of authors,
including John \cite{J61}, Reshetnyak  \cite{RES67}, and Kohn \cite{K82}, and it has had a number
of important applications, 
 in particular to thin film limits of elastic structures 
 \cite{FJM02,FJM06}.
 One of the 
remarkable things about this result is that it is a striking fact about classical mathematical objects  that
 could be understood by the mathematicians of  
 hundreds of years ago. 

A number of works have extended the above result \eqref{e:FJM} to cover various larger classes of
matrices than $K=SO (n)$. 
Chaudhuri and M\"uller \cite{CM04} and later
De Lellis and Szekelyhidi \cite{DS09} considered a set of the form $K = SO (n) A\cup SO (n) B$
where $A$ and $B$ are strongly incompatible in the sense of Matos  \cite{M92}. 
Faraco and Zhong \cite{FZ05} proved  
 an analogous quantitative rigidity result with $K=\mathfrak m\cdot SO(n)$ 
where $\mathfrak m \subset (0,+\infty)$  is compact.
There the infimum in the left-hand side of \eqref{e:FJM} also needs to include the gradients of M\"obius transforms, and the integral is over a smaller subset $U'\subset\subset U$.

Our main result is an optimal generalization of the quantitative rigidity estimate of \cite{FJM02} 
in the context of compact connected submanifolds $K\subset \mathbb{R}^{2\times 2}$ without boundary.
 
\begin{theorem}\label{t:main}
Let $K\subset\R^{2\times 2}$ be a smooth, compact  and connected  1-manifold without rank-one connections, and elliptic: there exists $C_*>0$ such that
\begin{align}\label{eq:ellipt}
|M-M'|^2\leq  C_* \det(M-M')\qquad\forall M,M'\in K.
\end{align}
 Then for any $u\in H^1(B_1;\R^2)$ we have
\begin{align}\label{eq:mainest}
\inf_{M\in K}\int_{B_{1/2}}| Du -M |^2\,dx  \leq C \int_{B_1} \dist^2(Du, K)\, dx,
\end{align}
for some constant $C=C(K)>0$.
\end{theorem}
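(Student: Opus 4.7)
The plan is to convert the approximate differential inclusion $Du\in K$ into a perturbed quasilinear Beltrami-type PDE via the conformal-anticonformal decomposition of $\R^{2\times 2}$, and then to combine an exact rigidity statement for that PDE with a compactness-contradiction argument in the spirit of \cite{FJM02}. I would begin by identifying $\R^2$ with $\C$ and splitting any $M\in\R^{2\times 2}$ as $M=M^+ + M^-$, where $M^+$ acts on $\C$ as $z\mapsto \mu z$ (conformal) and $M^-$ as $z\mapsto \nu\bar z$ (anticonformal). Since $|M|^2 = |M^+|^2 + |M^-|^2$ and $2\det M = |M^+|^2 - |M^-|^2$, the ellipticity hypothesis \eqref{eq:ellipt} is equivalent to
\begin{align*}
|(M-M')^-| \leq q\,|(M-M')^+| \qquad \forall M,M'\in K,
\end{align*}
for some $q\in(0,1)$ depending only on $C_*$. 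Consequently the conformal projection $\pi^+\colon K\to\C$ is bi-Lipschitz onto its image, and there exists a $q$-Lipschitz map $\Psi\colon\pi^+(K)\to\C$ with $M^- = \Psi(M^+)$ for every $M\in K$; Kirszbraun's theorem extends $\Psi$ to a $q$-Lipschitz map $\tilde\Psi\colon\C\to\C$.

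For $u\in H^1(B_1;\R^2)$ set $f = u_1 + iu_2$, so that $\partial_z f$ and $\partial_{\bar z} f$ encode the conformal and anticonformal parts of $Du$ up to a fixed constant. Selecting, for each $x$, a matrix $M(x)\in K$ with $|Du(x)-M(x)|=\dist(Du(x),K)$, a pointwise computation using the $q$-Lipschitz property of $\tilde\Psi$ gives
\begin{align*}
|\partial_{\bar z} f - \tilde\Psi(\partial_z f)| \leq C\,\dist(Du,K) \qquad \text{a.e.\ in } B_1,
\end{align*}
so that $f$ satisfies the quasilinear Beltrami equation $\partial_{\bar z} f = \tilde\Psi(\partial_z f) + h$ with $\|h\|_{L^2(B_1)}$ controlled by the right-hand side of \eqref{eq:mainest}. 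Because $\tilde\Psi$ is $q$-Lipschitz with $q<1$, standard quasiregular mapping theory --- in particular $L^p$-boundedness of the Beurling transform for $p$ close to $2$ --- delivers interior Caccioppoli estimates and higher integrability for $\partial_z f$ on $B_{1/2}$, uniformly in the error.

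For exact solutions one has the additional constraint $\partial_z f\in\pi^+(K)$ a.e., and the no-rank-one-connections assumption combined with the Beltrami structure then forces $\partial_z f$ to be constant on $B_1$; this is a \v{S}ver\'{a}k-type exact rigidity specialised to our setting. Granting it, I would derive \eqref{eq:mainest} by contradiction and compactness: if \eqref{eq:mainest} failed, a rescaled sequence $u_k$ with $\int_{B_1}\dist^2(Du_k,K)\,dx \to 0$ and $\inf_M\int_{B_{1/2}}|Du_k-M|^2\,dx = 1$ would produce, via the uniform Caccioppoli and higher-integrability bounds, a limit $f_\infty$ satisfying the exact quasilinear Beltrami equation on $B_{1/2}$ with $\partial_z f_\infty\in\pi^+(K)$ a.e. Exact rigidity would then force $\partial_z f_\infty$ constant, and, combined with the optimality of the minimisers $M_k$ and the compactness of $K$, this contradicts the normalisation.

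The principal difficulty I anticipate is the upgrade from weak to strong $L^2$ convergence of $\partial_z f_k$ in the compactness step: because $\tilde\Psi$ is only Lipschitz, passing to the limit in $\tilde\Psi(\partial_z f_k)$ requires strong convergence, not merely weak. I would aim to obtain this via a compensated-compactness / div-curl argument exploiting the uniform strict ellipticity coming from $q<1$, together with the $L^p$ gain for some $p>2$ to absorb the small error $h$. A secondary obstacle is the exact rigidity itself: the Beltrami equation has to be combined with a careful analysis of the compact curve $\pi^+(K)\subset\C$ and its tangent directions to rule out non-constant solutions, using in an essential way that $K$ has no rank-one connections.
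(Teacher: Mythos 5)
Your setup through the conformal--anticonformal decomposition, the $q$-Lipschitz map $\Psi$ with $q<1$, and its Kirszbraun extension is exactly the structural observation the paper makes (there it is the map $H$ and the Lipschitz constant $k=(C_*-1)/(C_*+1)<1$), and recasting the differential inclusion as a quasilinear Beltrami equation $\partial_{\bar z} f=\tilde\Psi(\partial_z f)+h$ is a legitimate variant of the paper's route via uniformly monotone vector fields $G_j$ and the equations $\dv G_j(Du_j)=0$; both encode the same ellipticity. The exact rigidity you invoke is Lemma~\ref{l:exact}, and that part is sound.

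The genuine gap is in how you propose to go from exact rigidity to the \emph{quantitative} estimate. A compactness-contradiction argument of the kind you sketch can, at best, produce a modulus of continuity: it shows that $\int_{B_1}\dist^2(Du,K)\,dx\to 0$ forces $\inf_M\int_{B_{1/2}}|Du-M|^2\,dx\to 0$. It cannot by itself give the linear bound \eqref{eq:mainest}. Concretely, your normalization ``$\inf_M\int_{B_{1/2}}|Du_k-M|^2\,dx=1$'' is not achievable: if the estimate fails, the contradictory sequence typically has both $\epsilon_k:=\int_{B_1}\dist^2(Du_k,K)\,dx\to 0$ \emph{and} $a_k:=\inf_M\int_{B_{1/2}}|Du_k-M|^2\,dx\to 0$, merely with $a_k/\epsilon_k\to\infty$; you cannot rescale $u_k$ to force $a_k=1$ without destroying the fixed compact target $K$. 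In that regime your limit argument says nothing, because a constant limit is entirely compatible with $a_k\to 0$. What is missing is a linearized estimate of Korn type for the tangent spaces $T_MK$ (the paper's Lemma~\ref{l:lin}, proved via the symbol of the projection $P_M$ and Smith's coercive inequalities) and a linearization step around a well-chosen $M_0\in K$ that converts $L^\infty$ smallness of $\dist(Du,K)$ into \eqref{eq:mainest} while controlling the quadratic error terms (the paper's Step~4, with its iterative selection of $M_j$). Without these two ingredients the proposal does not reach the claimed conclusion. A secondary gap: higher integrability from the Beurling transform is not sufficient to get the needed $L^\infty$ smallness of $\dist(Du,K)$ on an interior ball; the paper instead differentiates the quasilinear equation and applies De~Giorgi--Nash to get an interior $C^\alpha$ bound on $Du$, which is what actually feeds the compactness step. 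You also omit the reduction to Lipschitz maps (truncation, \cite[Prop.~A.1]{FJM02}) and the reduction to exact PDE solutions $w$ via monotonicity, both of which are used to make the argument rigorous.
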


\begin{remark}
A covering argument as in \cite{FZ05} shows that the estimate \eqref{eq:mainest} in the  balls $B_{1/2}\subset B_1$ automatically improves to 
\begin{align*}
\inf_{M\in K}\int_{\Omega'}| Du -M |^2\,dx  \leq C \int_{\Omega} \dist^2(Du, K)\, dx,\qquad\forall u\in H^1(\Omega;\R^2),
\end{align*}
for open bounded sets $\Omega'\subset\subset\Omega\subset\R^2$ and a constant $C=C(K,\Omega',\Omega)>0$.
\end{remark}

This result is optimal among compact connected submanifolds $K\subset\mathbb R^{2\times 2}$ without boundary
for the following reasons:
\begin{itemize}
\item First, it is classical that the no-rank-one-connections assumption is necessary for the rigidity of the exact differential inclusion (see e.g. \cite{K03}).
\item Second, ellipticity is necessary for the validity of the linearized version of \eqref{eq:mainest} (see Remark~\ref{r:lin}), because non-ellipticity would imply that the tangent space $T_{M}K$ has a rank-one connection for some $M\in K$. 
\item
Third, the two previous conditions (no rank-one connections and ellipticity) imply that the submanifold $K$ must be of dimension 1 \cite[Corollary~3.5 \& 3.6]{zhang97}.
\end{itemize}
Moreover, we provide in Section~\ref{s:3x3} an example showing that the two-dimensional setting is also optimal:
there exists an elliptic 1-submanifold $K\subset\R^{3\times 3}$ without rank-one connection but which contains a so-called $\mathcal T_4$ configuration, a well-known obstruction to compactness of sequences $\lbrace u_k\rbrace\subset H^1$ satisfying $\dist(Du_k,K)\to 0$ in $L^2$ \cite{BFJK94}, and therefore to any type of quantitative rigidity estimate.

One of our motivations for studying differential inclusion into general submanifolds $K\subset\R^{2\times 2}$ is our previous work \cite{LLP20} where we obtained a rigidity result for a non-elliptic differential inclusion related to the so-called Aviles-Giga functional, 
and pointed out the nice consequences that a corresponding quantitative rigidity estimate would have.
Theorem~\ref{t:main} is not valid for non-elliptic differential inclusions, 
but the ideas in the present work should be relevant to attain that goal.

While the statements of the quantitative rigidity results of \cite{FJM02,CM04,FZ05}
   are elementary, 
their proofs are not. 
Their starting point, in addition to rigidity of the exact differential inclusion,
is a linearized version of \eqref{e:FJM} for the differential inclusion $Du\in T_{M_0}K$ into a tangent space $T_{M_0}K$.
For $K=SO(n)$ and $M_0=I$, this is Korn's inequality.
A natural linearization procedure then provides a quantitative rigidity estimate, 
but in terms of the $L^\infty$ norm of $\dist(Du,K)$, rather than $L^2$.
Strengthening the $L^2$ bound on $\dist(Du,K)$ into an $L^\infty$ bound constitutes therefore the main difficulty.
A key idea, introduced in \cite{FJM02},
is to use the regularity of an elliptic PDE satisfied by solutions of the exact differential inclusion:
for $K=SO(n)$ the exact differential 
inclusion $Du\in SO\lt(n\rt)$ implies that the coordinate functions $u_k$ are harmonic. For $K\subset \mathbb{R}_{+}SO\lt(n\rt)$ the coordinate functions satisfy the $(n-2)$-Laplace equation $\dv (\lt|\na u_k\rt|^{n-2} \na u_k )=0$.
Such PDE follows from the universal identity $\dv\cof(Du)=0$ (where $\cof$ denotes the cofactor matrix), together with identities satisfied by matrices in the specific set $K$.
It is satisfied by solutions of the exact differential inclusion, 
and for a general map $u$ the error from solving that PDE can be controlled in terms of the right-hand side of \eqref{e:FJM}. 
This allows to reduce the proof of \eqref{e:FJM} to maps solving that PDE. Elliptic regularity  then provides, via a compactness argument, a uniform bound on $\dist(Du,K)$ and the linearization can be performed.

Following this scheme, 
the main ingredient to prove Theorem~\ref{t:main} is to 
embed $K$ into the graph of a uniformly monotone vector field: this will be enough to turn the identity $\dv\cof(Du)=0$ into a quasilinear elliptic equation for the exact differential inclusion $Du\in K$.

\begin{proposition}\label{p:G}
Let $K$ be as in Theorem~\ref{t:main}.
There exist $G_1,G_2\colon\R^2\to\R^2$ smooth, globally Lipschitz vector fields such that
\begin{align*}
K \subset 
\left\lbrace 
\left(\begin{array}{c} A \\ iG_1(A) \end{array}\right)\colon A\in\R^2
\right\rbrace
\cap
\left\lbrace 
\left(\begin{array}{c} -iG_2(B) \\ B \end{array}\right)\colon B\in\R^2
\right\rbrace,
\end{align*}
and $G_1,G_2$ are uniformly monotone, that is
\begin{align*}
(G_j(X)-G_j(X'))\cdot (X-X') \geq \lambda |X-X'|^2\qquad\forall X,X'\in\R^2,
\end{align*}
for some constant $\lambda >0$ depending only on $K$.
\end{proposition}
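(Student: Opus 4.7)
The plan is to use ellipticity to first show that $K$ is parametrized in a controlled way by each of its row projections, and then to extend the resulting curve datum to a global monotone map on $\R^2$. Identifying $\R^{2\times 2}$ with $\R^2\oplus\R^2$ via rows, write $M=(A,B)$ and note the bilinear identity $\det M = iA\cdot B$, where $i$ denotes rotation by $\pi/2$ in $\R^2$. If $M,M'\in K$ share a first row, then $M-M'=(0,B-B')$ has $\det(M-M')=0$, so ellipticity forces $M=M'$. Differentiating ellipticity along a curve in $K$ gives $|T|^2\leq C_*\det T$ for every tangent $T\in T_M K$, ruling out tangent vectors with vanishing first row. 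Consequently $\pi_1\colon K\to \Gamma_1:=\pi_1(K)$ is a smooth diffeomorphism onto a smooth closed Jordan curve in $\R^2$.

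Define $F_1\colon \Gamma_1\to \R^2$ by $F_1(A):=-iB$, where $(A,B)\in K$ is the unique preimage of $A$. Using the identity $U\cdot(-iV)=iU\cdot V=\det(U;V)$ for $U,V\in\R^2$ viewed as rows of a $2\times 2$ matrix,
\[
(F_1(A)-F_1(A'))\cdot(A-A')=\det(M-M')\geq \frac{|M-M'|^2}{C_*}\geq \frac{|A-A'|^2}{C_*},
\]
so $F_1$ is smooth on $\Gamma_1$ and $C_*^{-1}$-uniformly monotone there. The completely symmetric argument with the row labels swapped produces a smooth, $C_*^{-1}$-uniformly monotone $F_2\colon \Gamma_2:=\pi_2(K)\to \R^2$ given by $F_2(B)=iA$.

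The principal step, and the main obstacle, is to extend $F_1$ (and $F_2$) to a smooth globally Lipschitz uniformly monotone map on all of $\R^2$. I would work in a tubular neighborhood $T_{2\delta}=\{\dist(\cdot,\Gamma_1)<2\delta\}$ with smooth coordinates $x=A(s)+tN(s)$ (arclength parametrization $A$, signed distance $t$, outward unit normal $N$), and set
\[
\tilde G_1(A(s)+tN(s)):=F_1(A(s))+C\,t\,N(s)
\]
for a large constant $C$. A direct expansion of $(\tilde G_1(x_1)-\tilde G_1(x_2))\cdot(x_1-x_2)$, using the monotonicity of $F_1$ on $\Gamma_1$, the second-order geometric estimate $N(s)\cdot(A(s)-A(s'))=O(|s-s'|^2)$ for a smooth curve, and Young's inequality, shows that $\tilde G_1$ is uniformly monotone in $T_{2\delta}$ once $\delta$ is small enough. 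The delicate part is to glue $\tilde G_1$ to a simple background field off the tube without losing monotonicity: a naive cutoff $\eta\tilde G_1+(1-\eta)\lambda x$ introduces a rank-one correction to $DG_1$ of magnitude $|D\eta|\cdot|\tilde G_1-\lambda x|$ in its symmetric part, which must be dominated by the positive baseline $\lambda I$, and naive bookkeeping shows this competition is tight. A cleaner way to close this step is likely to use separate extensions on the interior and exterior of the Jordan curve, together with a global coordinate system compatible with the tubular structure near $\Gamma_1$ into which the normal-direction profile $F_1(A(s))+Ct\,N(s)$ extends consistently. Once $G_1$ is obtained, $G_2$ is constructed identically after exchanging the roles of the two rows.
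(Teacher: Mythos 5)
Your initial observations are correct and in fact your $F_1$ on $\Gamma_1 = \pi_1(K)$ agrees with the restriction to $\pi_1(K)$ of the paper's $G_1$: the row identity $\det M = iA\cdot B$, the injectivity of $\pi_1|_K$ forced by ellipticity, and the computation $(F_1(A)-F_1(A'))\cdot(A-A') = \det(M-M')\geq C_*^{-1}|A-A'|^2$ are all exactly right. But the extension step, which you yourself flag as "the principal step, and the main obstacle," is where your argument is genuinely incomplete, and you do not close it. The difficulty you identify is real: a cutoff $\eta\tilde G_1 + (1-\eta)\lambda x$ contributes a term $(\nabla\eta)\otimes(\tilde G_1 - \lambda x)$ whose symmetric part is of order $|\tilde G_1 - \lambda x|/\delta$, and since $|\tilde G_1-\lambda x|=O(1)$ near $\partial T_{2\delta}$ (it is $F_1$ plus a bounded normal correction), this blows up as $\delta\to 0$ and is not dominated by $\lambda I$. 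There is no Kirszbraun-type theorem for uniformly monotone maps, and the "cleaner way" you gesture at is not spelled out; as written the proof has a gap.

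The paper takes a structurally different route that sidesteps this obstacle entirely. Instead of extending a monotone map from a curve, it works in the conformal/anticonformal decomposition $M=[z_+,z_-]$, for which $\det M = |z_+|^2 - |z_-|^2$ and $|M|^2 = 2|z_+|^2 + 2|z_-|^2$. Ellipticity then says precisely that $K$ is the graph of a \emph{contraction}: $K=\{[z,H(z)]:z\in p_+(K)\}$ with $H$ $k$-Lipschitz, $k<1$. Contractions extend globally by Kirszbraun with the same Lipschitz constant, and a careful mollification (Lemma~\ref{l:extH}) makes the extension smooth while keeping $k<1$. Setting $F(z)=\bar z + H(z)$ (a global bi-Lipschitz diffeomorphism because $\|DH\|<1$) and $G_1(A)=\overline{F^{-1}(A)}-H(F^{-1}(A))$, the monotonicity of $G_1$ is then an algebraic identity:
\begin{align*}
(G_1(A)-G_1(A'))\cdot(A-A') = |z-z'|^2 - |H(z)-H(z')|^2 \geq (1-k^2)|z-z'|^2 \geq \tfrac{1-k}{1+k}|A-A'|^2.
\end{align*}
In other words, the conformal variable $z$ linearizes the problem: in that variable the hard "extend a monotone map" problem becomes the standard "extend a contraction" problem. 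If you want to salvage your row-based approach, you would need an actual construction that extends $F_1$ off $\Gamma_1$ with the monotone lower bound preserved globally, and the tubular-plus-cutoff scheme you sketch does not, as described, deliver that.
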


Proposition~\ref{p:G} relies on 
remarkable properties of elliptic
subsets of $\R^{2\times 2}$ with respect to the decomposition into 
conformal and anticonformal parts, 
discovered in \cite{zhang97} and  exploited in a striking manner in \cite{FS08} (see also \cite{K03}).
(It is also related to the classical link between two-dimensional elliptic PDEs of second order and complex Beltrami equations, see e.g. the introduction of \cite{AIM09} and references therein.)
Then the proof of Theorem~\ref{t:main} follows the scheme outlined above.

The article is organized as follows. 
In Section~\ref{s:basic} we establish the two basic prerequisites to Theorem~1, rigidity for the exact differential inclusion and the linearized estimate. 
In Section~\ref{s:G} we give the proof of  Proposition~\ref{p:G}.
In Section~\ref{s:proof} we gather these ingredients to prove Theorem~\ref{t:main}.
In Section~\ref{s:3x3} we describe the counterexample  in $\R^{3\times 3}$.

\medskip

\noindent\bf  Acknowledgments. \rm  X. L. received support from ANR project ANR-18-CE40-0023. A. L. gratefully acknowledges the support of the Simons foundation, collaboration grant \#426900. G. P. was supported in part by NSF grant DMS-2206291.

\section{Basic ingredients: rigidity of the exact inclusion and linearized estimate}\label{s:basic}

In this section we prove the following two Lemmas.

\begin{lemma}\label{l:exact}
If $u\in H^1(B_1;\R^2)$ is such that $Du\in K$ a.e., then $Du\equiv M$ for some $M\in K$. 
\end{lemma}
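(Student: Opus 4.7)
The plan is to promote the a.e.\ inclusion $Du\in K$ to pointwise $C^1$ smoothness of $u$, and then extract rigidity by a Schwarz-symmetry computation combined with the ellipticity condition applied to tangent vectors of $K$. Compactness of $K$ gives $u\in W^{1,\infty}(B_1;\R^2)$ for free. For the regularity upgrade I would invoke Proposition~\ref{p:G} (a forward reference to Section~\ref{s:G}, whose proof does not use the present lemma): the inclusion $Du\in K$ recasts as a first-order quasilinear Beltrami-type system $\partial_{\bar z}u=F(\partial_z u)$ with $F$ globally Lipschitz, and the uniform monotonicity of $G_1$ makes this system uniformly elliptic. Standard planar regularity theory (in the spirit of \cite{AIM09}) then yields $u\in C^{1,\alpha}_{\mathrm{loc}}$, and in fact $u\in C^\infty$ by a routine bootstrap.

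With $u$ of class $C^1$, smoothness and compactness of the connected $1$-manifold $K$ allow me to parametrize $K=\{M(s):s\in\R/L\Z\}$ and write $Du(x)=M(t(x))$ for some $t\in C^1(B_1;\R/L\Z)$. Taking the pointwise identity $\partial_l u_k(x)=M_{kl}(t(x))$ and applying the Schwarz symmetry $\partial_i\partial_l u_k=\partial_l\partial_i u_k$ gives
\[
M'_{kl}(t(x))\,\partial_i t(x)=M'_{ki}(t(x))\,\partial_l t(x)\qquad\text{for all }i,k,l\in\{1,2\}.
\]
Fixing $k$, this forces the row $(M'_{k1}(t),M'_{k2}(t))$ to be parallel to $\nabla t(x)$, so $M'(t(x))=\xi(x)\otimes\nabla t(x)$ for some $\xi(x)\in\R^2$; in particular $M'(t(x))$ has rank at most $1$ wherever $\nabla t\ne 0$.

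To finish, ellipticity rules out rank-one tangent directions to $K$: applying \eqref{eq:ellipt} with $M=M(s)$ and $M'=M(s+h)$, dividing by $h^2$ and letting $h\to 0$ yields $|M'(s)|^2\le C_*\det M'(s)$. Since $s\mapsto M(s)$ parametrizes a $1$-manifold, $M'(s)\ne 0$ and hence $\det M'(s)>0$; so $M'(s)$ has full rank, contradicting the rank bound of the preceding paragraph unless $\nabla t\equiv 0$. Connectedness of $B_1$ then forces $t$ to be constant and $Du\equiv M(t_0)$, as claimed. The step I expect to be the main obstacle is the regularity upgrade: passing through Proposition~\ref{p:G} is the cleanest route, but if one prefers to keep Section~\ref{s:basic} independent of Section~\ref{s:G} one can instead argue directly from \eqref{eq:ellipt} --- in complex form one obtains $|\partial_{\bar z}v|\le k\,|\partial_z v|$ with $k<1$ for $v=u-M_0\,x$, hence $v$ is quasi-regular, and a Stoilow factorization combined with the constraint that $\partial_z v$ lie on a $1$-dimensional curve should already suffice to feed into the tangent-vector computation above.
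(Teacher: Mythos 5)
Your argument is correct and is essentially the paper's proof: the paper likewise first upgrades $u$ to a smooth map (by citing \v{S}ver\'ak's regularity theorem \cite{sverak93} for elliptic inclusions rather than rederiving it from Proposition~\ref{p:G}, though the underlying mechanism is the same quasiregularity you sketch), then lifts $Du=M(\theta)$ using simple connectedness, and concludes from $\dv\cof(Du)=0$ that $\cof(M'(\theta))\nabla\theta=0$, hence $\nabla\theta\equiv 0$ since ellipticity makes $M'(\theta)$ invertible. For a gradient field the identity $\dv\cof(Du)=0$ is exactly the Schwarz symmetry you invoke, and your ``rank-one tangent versus $\det M'(s)>0$'' contradiction is the same dichotomy as the invertibility of $\cof(M'(\theta))$, so the two proofs coincide in substance.
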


\begin{lemma}\label{l:lin}
For all $M\in K$ and $u\in H^1(B_1;\R^2)$ we have
\begin{align}\label{eq:lin}
\inf_{X\in T_M K} \int_{ B_{1}}|Du-X|^2\, dx \leq C \int_{B_1} \dist^2(Du,T_M K)\, dx
\end{align}
for some constant $C=C(K)>0$, where $T_M K$ denotes the linear tangent space to $K$ at $M$.
\end{lemma}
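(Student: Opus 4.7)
The plan is to decompose $Du$ into its component along $T_MK=\R A_0$ and the orthogonal remainder, then use curl-freeness of $Du$ to derive the required $L^2$ control on the scalar coefficient.

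First, I would extract from the ellipticity assumption~\eqref{eq:ellipt} its infinitesimal form: setting $M'=M+tA$ with $A\in T_MK$ and letting $t\to 0$ yields $|A|^2\leq C_*\det A$ for every $A\in T_MK$; in particular, a generator $A_0$ of the one-dimensional space $T_MK$ is invertible with $\det A_0>0$. Writing $\alpha(x)=\la Du(x),A_0\ra/|A_0|^2$ and $R(x)=Du(x)-\alpha(x)A_0$, we have $R\perp A_0$ pointwise, $|R|=\dist(Du,T_MK)$, and the optimal $X\in T_MK$ equals $\bar\alpha A_0$, where $\bar\alpha=|B_1|^{-1}\int_{B_1}\alpha\,dx$, with
\[
\int_{B_1}|Du-\bar\alpha A_0|^2\,dx=\int_{B_1}|R|^2\,dx+|A_0|^2\int_{B_1}(\alpha-\bar\alpha)^2\,dx.
\]

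The heart of the matter is to bound the second term by $\int|R|^2$. Since $u\in H^1$, $\mr{curl}(Du)=0$ distributionally (applied row by row). A direct computation gives $\mr{curl}(\alpha A_0)=(A_0 J)\na\alpha$ with $J=\left(\begin{smallmatrix}0&1\\-1&0\end{smallmatrix}\right)$, so
\[
(A_0 J)\na\alpha=-\mr{curl}(R).
\]
This is where ellipticity crucially enters: $\det(A_0 J)=\det A_0>0$, hence $A_0 J$ is invertible, and since $\mr{curl}$ is a first-order operator acting on $L^2$ data we obtain
\[
\|\na\alpha\|_{H^{-1}(B_1)}\leq C\|\mr{curl}(R)\|_{H^{-1}(B_1)}\leq C\|R\|_{L^2(B_1)}.
\]

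The last step is to upgrade this $H^{-1}$ control on $\na\alpha$ to an $L^2$ control on $\alpha-\bar\alpha$ via the Ne\v{c}as (or Lions) lemma on the Lipschitz domain $B_1$: any distribution with vanishing mean and $H^{-1}$ gradient lies in $L^2$, with $\|\alpha-\bar\alpha\|_{L^2(B_1)}\leq C\|\na\alpha\|_{H^{-1}(B_1)}$. Combining everything yields \eqref{eq:lin}. The main technical obstacle, beyond correctly identifying the algebraic role of ellipticity through $\det(A_0 J)=\det A_0$, is invoking the Ne\v{c}as inequality --- a classical but nontrivial regularity result on Lipschitz domains; once granted, the remainder of the argument reduces to elementary linear algebra and integration by parts.
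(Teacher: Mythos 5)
Your proof is correct, and it takes a genuinely different route from the paper's. The paper proceeds via the general theory of overdetermined constant-coefficient elliptic systems: it introduces the projection $P_M$ onto $(T_MK)^\perp$, checks that the symbol $\mathbb P_M(\xi)$ has trivial kernel (which is where ``no rank-one matrices in $T_MK$'' enters), invokes the coercive inequality of Smith (\cite[Thm.~8.15]{smith70}) to get $\|Du\|_{L^2}\leq C\|P_M Du\|_{L^2}+C\|u\|_{L^2}$, and then runs two compactness arguments --- one to make the constant uniform over $M\in K$, and another to eliminate the lower-order $\|u\|_{L^2}$ term. You instead decompose $Du=\alpha A_0+R$ along the line $T_MK=\R A_0$, observe that $\mathrm{curl}(Du)=0$ forces $(A_0J)\nabla\alpha$ to equal $\pm\mathrm{curl}(R)$, use $\det(A_0J)=\det A_0>0$ (the same consequence of ellipticity the paper uses, here in the form $|A|^2\leq C_*\det A$ for $A\in T_MK$) to invert $A_0J$, and close via the Ne\v{c}as inequality $\|\alpha-\bar\alpha\|_{L^2(B_1)}\leq C\|\nabla\alpha\|_{H^{-1}(B_1)}$. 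Both arguments rest on one classical black box (Smith's coercivity vs.\ Ne\v{c}as), but yours is more concrete and eliminates both compactness steps: the constant is explicitly controlled since, normalizing $|A_0|=1$, one has $\|(A_0J)^{-1}\|=\|A_0^{-1}\|=\|A_0\|/\det A_0\leq C_*$. The paper's functional-analytic route is arguably more robust (it would adapt to higher-dimensional tangent spaces or $n\times n$ matrices), whereas yours exploits the $2\times 2$ and one-dimensional structure more directly. Minor points you should tidy up when writing this out: there is a sign discrepancy in $\mathrm{curl}(\alpha A_0)=\pm(A_0J)\nabla\alpha$ depending on your curl convention (irrelevant to the estimate), and you should state explicitly, as above, why the constant is uniform in $M\in K$.
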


\begin{remark}\label{r:lin}
The linearized estimate \eqref{eq:lin}, or rather its weaker interior version
\begin{align}\label{eq:linweak}
\inf_{X\in T_M K} \int_{ B_{1/2}}|Du-X|^2\, dx \leq C \int_{B_1} \dist^2(Du,T_M K)\, dx,
\end{align}
is a necessary condition for \eqref{eq:mainest} to be valid. Assume indeed that \eqref{eq:mainest} is verified, fix $u\in C^1(\overline B_1;\R^2)$, and apply \eqref{eq:mainest} to $v_\e(x)=Mx +\e u(x)$ for $\e\ll 1$. There exists $M_\e\in K$ such that
\begin{align*}
\int_{B_{1/2}}|M-M_\e -\e Du|^2\, dx &\leq C \int_{B_1}\dist^2(M +\e Du,K)\, dx \\
&\leq  C \e^2\int_{B_1} \dist^2(Du,T_M K)\, dx + o(\e^2).
\end{align*}
Hence, letting $X_\e=\e^{-1}(M-M_\e)$, we have
\begin{align*}
\int_{B_{1/2}}|X_\e -Du|^2\leq C \int_{B_1}\dist^2(Du,T_M K)\, dx + o(1).
\end{align*}
In particular $X_\e$ is bounded, and extracting a converging subsequence we obtain $X\in T_M K$ showing the validity of \eqref{eq:linweak} for $u\in C^1(\overline B_1;\R^2)$, and then by density for $u\in H^1(\overline B_1;\R^2)$.
\end{remark}

\begin{proof}[Proof of Lemma~\ref{l:exact}]
 Let $\ell=|K|$ and    $M\colon \R/\ell Z\to K$ be an arc-length parametrization of $K$, the ellipticity assumption ensures that $M'(t)$ is invertible for all $t\in\R$. Let $u\in H^1(B_1;\R^2)$ such that $Du\in K$ a.e., then $u$ is smooth by \cite{sverak93}, and since $B_1$ is simply connected there exists a smooth lifting $\theta\colon B_1\to\R$ such that $Du=M(\theta)$.
Using that  $\dv \cof(Du)=0$, where $\cof$ denotes the cofactor matrix, we find
$\cof (M'(\theta))\nabla\theta =0$, hence $\nabla\theta=0$ since $\cof(M'(\theta))$ is invertible. Therefore $Du$ is constant.
\end{proof}

\begin{proof}[Proof of Lemma~\ref{l:lin}] 
For $M\in K$ we denote by $P_M\colon\R^{2\times 2}\to\R^{2\times 2}$ the orthogonal projection onto $(T_M K)^\perp$, so that
\begin{align*}
\dist^2(X,T_M K) =|P_M X|^2.
\end{align*}
We denote by $P_M^{\alpha\beta jk}\in\R$ the coefficients of $P_M$, that is,
\begin{align*}
(P_MX)_{\alpha\beta}=\sum_{jk}P_M^{\alpha\beta jk}X_{jk}\qquad\forall X=(X_{jk})\in\R^{2\times 2}.
\end{align*}
Define $\mathbb P_M(i\xi)\in \mathcal L(\C^2; \C^{2\times 2})$ by 
\begin{align*}
(\mathbb P_M(i\xi)v)_{\alpha\beta} =\sum_{jk}P_M^{\alpha\beta jk}v_j\xi_k\qquad\forall \xi,v\in\C^2,
\end{align*}
so the differential operator $u\mapsto P_MDu$ has symbol $\mathbb P_M(i\xi)$,
 i.e.,
\begin{align*}
\lt(P_M\lt(Df\rt)\rt)_{\alpha\beta}=\frac{1}{2\pi}\int \lt(\mathbb{P}_M\lt(i\xi\rt) \hat{f}\lt(\xi\rt) \rt)_{\alpha \beta} e^{i x\cdot \xi} d\xi.
\end{align*}
We claim that $\mathbb P_M(\xi)$ has trivial kernel for all non-zero $\xi\in\C^2$. Let indeed $v\in\C^2$ such that $\mathbb P_M(\xi)v=0$. This implies that $P_M \re(v\otimes \xi)=P_M \im(v\otimes\xi)=0$, because the coefficients $P_M^{\alpha\beta jk}$ are real-valued. 
In other words, the real and imaginary parts of $v\otimes\xi$ both belong to $\ker P_M=T_M K$.
Since $T_M K$ is a one-dimensional subspace of $\R^{2\times 2}$ which doesn't contain any rank-one matrix, we have $T_M K =\R X_0$ for some invertible matrix $X_0$. 
Hence we deduce that $v\otimes \xi =\lambda X_0$ for some $\lambda\in\C$ and an invertible matrix $X_0\in T_M K $. But $v\otimes\xi$ has zero determinant, so $\lambda=0$ and we must have $v=0$. 
This proves that $\mathbb P_M(\xi)$ has trivial kernel.
Therefore we have the representation formula 
\cite[Theorem~4.1]{smith70} 
and the coercive inequality 
\cite[Theorem~8.15]{smith70} that follows from it,
\begin{align}\label{eq:linlow}
\int_{B_1}|Du|^2\, dx \leq C \int_{B_1}|P_MDu|^2\, dx + C \int_{B_1}|u|^2\, dx,
\end{align} 
for all $u\in H^1(B_1;\R^2)$.  
(In the notation of \cite{smith70}, $N=4$, $M=2$, and the index set $\lt\{1,2,3,4\rt\}$ for $j$ is in our case given by $\lt\{1,2\rt\}^2$, and we can take $m_j=1$ for $j\in\lbrace 1,2\rbrace^2$ and $l_i=0$ for $i\in\lt\{1,2\rt\}$.)
The constant $C>0$ in \eqref{eq:linlow} depends a priori on the fixed matrix $M\in K $. Denote by $C(M)$ the best possible constant in \eqref{eq:linlow}. Then for any $M,M'\in K $ we have
\begin{align*}
\int_{B_1}|Du|^2\, dx &\leq 2C(M) \int_{B_1}|P_{M'}Du|^2\, dx + C(M) \int_{B_1}|u|^2\, dx\\
&\quad + 2C(M)\| P_M-P_{M'}\|^{2} \int_{B_1}|Du|^2\, dx.
\end{align*}
For all $M\in K $, there exists $\delta(M)>0$ sufficiently small such that for all $M'\in  K \cap B_{\delta(M)}(M)$, we have $2C(M)\| P_M-P_{M'}\|^2<1/2$. It follows that
\begin{align*}
C(M')\leq \frac{2C(M)}{1-2C(M)\|P_M-P_{M'}\|^{2}}
<4C(M)
\qd
\forall M'\in  K \cap B_{\delta(M)}(M). 
\end{align*} 
By compactness this implies that $M\mapsto C(M)$ is bounded on $ K $, so we can take the constant $C$ in \eqref{eq:linlow} to depend only on $ K $.

Moreover, if $u\in H^1(B_1;\R^2)$ satisfies $P_MDu=0$ a.e., then
$Du=\lambda X_0$
for some $\lambda\in L^2(B_1;\R)$,
and the  distributional 
 identity $0=\dv \cof(Du)=\cof(X_0)\nabla\lambda$
 implies that $\lambda$ is constant, hence $Du\equiv X$ for some $X\in T_M K $. 

Therefore \eqref{eq:lin} follows from \eqref{eq:linlow} via a compactness argument: assume by contradiction the existence of sequences $M^k\in K $, and $u^k\in H^1(B_1;\R^2)$ such that
\begin{align*}
\inf_{X\in T_{M^k} K }\int_{B_1} |Du^k-X|^2 \,dx =1,\quad \int_{B_1}|P_{M^k}Du^k|^2\, dx\longrightarrow 0.
\end{align*}
Subtracting from $u^k$ its average and $X^kx$ for the matrix $X^k$ at which the infimum in the left-hand side is attained, we may in fact assume 
\begin{align*}
\int_{B_1} u^k \, dx =\int_{B_1} D u^k \cdot X\, dx =0\qquad\forall X\in T_{M^k} K ,
\end{align*}
and
\begin{align*}
 \int_{B_1} |Du^k|^2 \,dx =1,\quad \int_{B_1}|P_{M^k}Du^k|^2\, dx\longrightarrow 0.
\end{align*}
Thus we may extract subsequences (not relabeled) $u^k\to u$ weakly in $H^1(B_1;\R^2)$ and strongly in $L^2(B_1;\R^2)$, and $M^k\to M\in K $. It follows that $P_{M^k}Du^k\rightharpoonup P_{M}Du$ in $L^2(B_1;\R^{2\times 2})$, 
and thus by lower semicontinuity of the $L^2$ norm under weak convergence, we have
$P_{M}Du=0$ a.e., 
which implies $Du\equiv X$ for some $X\in T_M K $. 
Approximating $X$ by a sequence $X^k\in T_{M^k} K $ and using $Du^k\rightharpoonup Du$ in $L^2(B_1;\R^{2\times 2})$, we deduce that $0=\int_{B_1} Du \cdot X\, dx=|B_1||X|^2$, and thus $Du\equiv X=0$. 
Further $u$ satisfies $\int_{B_1} u\, dx=0$, which implies $u\equiv0$. Plugging $u^k$ into \eqref{eq:linlow} gives
\begin{align*}
	1=\int_{B_1}|Du^k|^2\, dx \leq C \int_{B_1}|P_{M^k}Du^k|^2\, dx + C \int_{B_1}|u^k|^2\, dx.
\end{align*} 
Passing to the limit as $k\to\infty$ and using the strong $L^2$ convergence, we have $1\leq C \int_{B_1} |u|^2\, dx=0$, which gives a contradiction.
\end{proof}

\section{Proof of Proposition~\ref{p:G}}\label{s:G}

We only prove the existence of $G_1$, the existence of $G_2$ is obtained by the same arguments. The proof relies on the properties of the conformal and anticonformal projections of $ K $ uncovered in \cite{zhang97,FS08}. For any $z_+,z_-\in\mathbb C$, we denote by
\begin{align*}
[z_+,z_-]=\left(\begin{array}{cc}\re z_+ & -\im z_+ \\ \im z_+ & \re z_+ \end{array}\right)
+
\left(\begin{array}{cc}\re z_- & \im z_- \\ \im z_- & -\re z_- \end{array}\right)\in\R^{2\times2},
\end{align*}
the $2\times 2$ matrix whose conformal, respectively anticonformal, part is represented by $z_+$, respectively $z_-$. For any $A\in\R^{2\times2}$, the decomposition $A=[z_+,z_-]$ is unique, and 
we have the identities
\begin{equation*}
	\det A = |z_+|^2 - |z_-|^2,
\quad
	|A|^2=2|z_+|^2+2|z_-|^2, \qd \|A\|=|z_+| + |z_-|,
\end{equation*}
where $|A|$ and $\|A\|$ denote the Hilbert-Schmidt and the operator norms of $A$, respectively. We denote by $p_+\colon [z_+,z_-]\mapsto z_+$ the projection onto the conformal part.

 Using these notations, the ellipticity assumption \eqref{eq:ellipt} is equivalent to
 \begin{align*}
2|z_+-z_+'|^2+2|z_- -z_-'|^2 
&
\leq C_* \left(|z_+-z_+'|^2+|z_- -z_-'|^2\right),
 \end{align*}
for all $[z_+,z_-],[z_+',z_-']\in K $. 
This implies that the curve $ K $ is $C_*$-elliptic in the sense of \cite[Def. 1]{FS08}, and it follows from the analysis in \cite{zhang97} (see also the proofs of Lemmas~1 and 2 in \cite{FS08}) that $p_+( K )$ is a Jordan curve and 
 \begin{align*}
  K  =\left\lbrace [z,H(z)] \colon z\in p_+( K )\right\rbrace,
 \end{align*}
 for some $k$-Lipschitz function $H\colon p_+( K )\to\C$ with $0\leq k=(C_*-1)/(C_*+1)<1$. 
Further the explicit formula $p_{+}(A)=(a_{11}+a_{22})/2-i(a_{12}-a_{21})/2$ for $A=(a_{ij})$ and the smoothness of $ K $ imply that $p_{+}( K )\subset\mathbb{C}$ is smooth.
 
 \begin{lemma}\label{l:extH}
 The function $H$ admits a smooth extension $H\colon\C\to\C$ which is $k$-Lipschitz for some (possibly larger) $0\leq k<1$.
 \end{lemma}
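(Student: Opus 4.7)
The plan is to construct the extension in two stages: first a smooth ``tubular'' extension in a neighborhood of $p_+(K)$, then a global smooth function obtained by patching with a mollified Kirszbraun-type extension. Using smoothness and compactness of $p_+(K)$, parametrize the curve by arc length $\gamma\colon\R/L\Z\to\C$, with unit normal $\nu(s)$ and signed curvature $\kappa(s)$; for some $\delta_0>0$, the map $(s,t)\mapsto\gamma(s)+t\nu(s)$ is a smooth diffeomorphism from $\R/L\Z\times(-\delta_0,\delta_0)$ onto a tubular neighborhood $N_{\delta_0}$ of $p_+(K)$. Inside $N_\delta$ for some $\delta\in(0,\delta_0)$ to be chosen, define
\begin{align*}
	\tilde H(\gamma(s)+t\nu(s)) := H(\gamma(s)).
\end{align*}
This is smooth and equals $H$ on $p_+(K)$; a direct computation in tubular coordinates gives $|\na\tilde H|=|H'(s)|/|1-t\kappa(s)|\leq k/(1-\delta\|\kappa\|_\infty)$, which, since $k<1$, is $\leq k_1\in(k,1)$ for $\delta$ small.

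To extend to all of $\C$ while preserving the values on $p_+(K)$, apply Kirszbraun's theorem to get a $k$-Lipschitz $\bar H\colon\C\to\C$ with $\bar H|_{p_+(K)}=H$, and mollify to obtain $\bar H_\veps:=\bar H*\rho_\veps$: smooth and $k$-Lipschitz, with $|\bar H_\veps-H|\leq k\veps$ on $p_+(K)$. Patch via a smooth cutoff $\chi\colon\C\to[0,1]$ with $\chi\equiv 1$ on $N_{\delta/2}$ and $\chi\equiv 0$ outside $N_\delta$:
\begin{align*}
	\hat H(z) := \chi(z)\tilde H(z) + (1-\chi(z))\bar H_\veps(z).
\end{align*}
On $p_+(K)$, $\chi=1$ and hence $\hat H=\tilde H=H$. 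The gradient decomposes as $\chi\na\tilde H+(1-\chi)\na\bar H_\veps+(\tilde H-\bar H_\veps)\na\chi$: the first two terms are jointly bounded by $\max(k_1,k)<1$, and for the third, both $\tilde H$ and $\bar H_\veps$ agree with $H$ on $p_+(K)$ up to the mollification error, so $|\tilde H-\bar H_\veps|(z)\leq k(|t|+\veps)$ at signed distance $t$ from $p_+(K)$, yielding, with $|\na\chi|=O(1/\delta)$, a cutoff contribution of order $k$.

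The main obstacle is to arrange the above so that the total Lipschitz constant of $\hat H$ stays strictly below $1$ for all $k\in[0,1)$. The direct estimate above yields a bound of the form $(1+C)k$ for a constant $C$ coming from the cutoff, which suffices only when $k$ is small enough. The refinement I would use is to replace the initial tubular extension by a Whitney extension prescribing, on $p_+(K)$, the tangential derivative equal to $H'$ and the normal derivative equal to zero; this gives $|\na\tilde H|\leq k+o(1)$ uniformly in $N_\delta$ as $\delta\to 0$, rather than the weaker $k/(1-\delta\|\kappa\|_\infty)$. Combined with $\veps\ll\delta$ and a careful balancing of $\delta$ against the geometric quantities $\|\kappa\|_\infty$ and $\mathrm{diam}(p_+(K))$, the cutoff contribution can be made smaller than $1-k$, producing a smooth extension with Lipschitz constant $k'<1$ depending only on $K$.
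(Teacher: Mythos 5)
Your first stage (a tubular extension that is constant in the normal direction, with Lipschitz constant $k/(1-\delta\|\kappa\|_\infty)\to k$ as $\delta\to 0$) is essentially the same idea as the paper's, and you correctly identify the main obstacle. The gap is in the patching step, and your proposed refinement does not close it.

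The problematic term is $(\tilde H-\bar H_\veps)\nabla\chi$, and it is \emph{not} of order $o(1)$ no matter how the parameters are balanced. Since $\bar H_\veps$ is only known to be $k$-Lipschitz and to agree with $H$ on $p_+(K)$ up to $k\veps$, at signed distance $t$ from the curve one can only bound $|\tilde H-\bar H_\veps|\lesssim k|t|+k\veps$ (even with a Whitney extension making $\tilde H = H+O(t^2)$ along normals, the $k|t|$ comes from $\bar H_\veps$, not from $\tilde H$). On the transition annulus $|t|\in[\delta/2,\delta]$ one has $|\nabla\chi|\gtrsim 1/\delta$, so the product is $\gtrsim k$ with a universal constant in front that cannot be sent to zero by shrinking $\delta$ or $\veps$: both factors scale as $\delta^{\pm 1}$. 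The Whitney refinement improves the estimate for $|\nabla\tilde H|$ itself (which, as noted, was already $k+o(1)$ for the constant-along-normals extension), but it leaves the $k|t|\cdot|\nabla\chi|$ term untouched. So the final bound is of the form $(1+C)k$ with $C$ a fixed constant, and no ``careful balancing'' removes the constraint $k<1/(1+C)$.

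The paper sidesteps this by never taking a convex combination of two functions. Instead it takes a single Kirszbraun extension $\widehat H$ and composes it with a Lipschitz map $\Phi$ of $\C$ that collapses a thin tube of width $\delta^2$ onto $p_+(K)$, transitions to the identity over the annulus of width $\delta$, and equals the identity outside. The key point is that $\Phi$ has Lipschitz constant $1+O(\delta)$ (because the tubular coordinate chart is a near-isometry and the radial reparametrization $\lambda$ has slope $\leq(1-\delta)^{-1}$), so $\widehat H\circ\Phi$ is $k(1+O(\delta))$-Lipschitz, smooth near the curve, and agrees with $H$ there. Smoothness away from the curve is then obtained by a mollification with a radius that vanishes near $p_+(K)$ (a convolution $\int\widetilde H(z+\veps\chi(z)y)\rho(y)\,dy$), which increases the Lipschitz constant by only $1+O(\veps)$ and does not alter the values on the curve. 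In short: reparametrize the domain rather than interpolate between two functions --- this is what keeps the Lipschitz constant below $1$ for every $k<1$.
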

 \begin{proof}[Proof of Lemma~\ref{l:extH}]
We first fix, thanks to Kirszbraun's theorem, a $k$-Lipschitz extension $\widehat H\colon \C\to\C$. In the rest of the proof we modify $\widehat H$ to make it smooth while still agreeing with $H$ on $p_+( K )$, at the cost of slighlty increasing  its Lipschitz constant.
 
 Let $[z_+(t),z_-(t)]$, $t\in\R/\ell\Z$, denote a smooth arc-length parametrization of $ K $, so that $2|\dot z_+|^2+2|\dot z_-|^2=1$, where $\dot z_{\pm}=\frac{d}{dt}z_{\pm}$. 
 As $z_- =H(z_+ )$ and $H$ is $k$-Lipschitz, it follows that $|\dot z_-|\leq k|\dot z_+|$, so $|\dot z_+|^2\geq  \frac{1}{2(1+k^2)} >0$.
Therefore we may reparametrize and consider 
\begin{align*}
 K  =\left\lbrace [z(s),H(z(s))]\colon s\in \R/\ell_+\Z \right\rbrace,
\end{align*} 
with $z(s)$ an arc-length parametrization of $p_+( K )$ and $\ell_+$ its length, and the map
$s\mapsto H(z(s))$ is smooth by smoothness of $ K $. 

\medskip

For small enough $\delta>0$, the map
 \begin{align*}
 \varphi\colon \R/\ell_+\Z\times (-2\delta,2\delta) & \to \mathcal U_{2\delta }= \lbrace z\in\C\colon \dist(z,p_+( K ))<2\delta\rbrace\\
 (s,r) &\mapsto z(s) +r i\dot z(s),
 \end{align*}
 is a smooth diffeomorphism.
 We first modify $\widehat H$ by setting
 \begin{align*}
 \widetilde H =\widehat H\circ\Phi,\qquad
  \Phi(Z)=\begin{cases}
 z(s)+\lambda(r) i\dot z(s) 
 &\text{if }Z=\varphi(s,r)\in\mathcal U_{2\delta}, \\
 Z & \text{otherwise,}
 \end{cases}
 \end{align*}
 where $\lambda$ is the odd $(1-\delta)^{-1}$-Lipschitz function given for $r>0$ by
\begin{align*}
\lambda(r)=\begin{cases}
0 &\text{for } 0\leq r \leq \delta^2,\\
-\frac{\delta^2}{1-\delta} +\frac{r}{1-\delta} &\text{for }\delta^2< r\leq \delta,\\
r &\text{for } r>\delta.
\end{cases}
\end{align*} 
In particular we have
 \begin{align*}
 \widetilde H(Z)=H(z(s))\qquad \forall Z=\varphi(s,r)\in\mathcal U_{\delta^2},
 \end{align*}
 so   $\widetilde H$ is smooth in $\mathcal U_{\delta^2}$ (by smoothness of $s\mapsto H(z(s))$ and $\varphi^{-1}$) and agrees with $H$ on $p_+( K )$. 
 Note that by definition of $\varphi$ and $\lambda$ we have $\Phi(Z)=Z$ in $\C\setminus U_{\delta}$ and therefore $\Phi$ is Lipschitz in $\C$.
Since $D\varphi(s,0)\in SO(2)$ for all $s\in\R/\ell_+\Z$, we have $\| D\varphi\|\leq 1+C\delta$ on $\R/\ell_+\Z\times(-2\delta,2\delta)$. Further, we have $\| D(\varphi^{-1})\|=1$ on $p_+( K )$, and $\| D(\varphi^{-1})\|\leq 1+C\delta$ on $\mathcal U_{2\delta}$. Denoting by $\psi$ the $(1-\delta)^{-1}$-Lipschitz map $(s,r)\mapsto (s,\lambda(r))$, we write $\Phi(Z)=\varphi(\psi(\varphi^{-1}(Z)))$ and deduce that $\|D\Phi\|\leq 1+C\delta$ a.e. in $\mathcal U_{2\delta}$. This inequality is also true in the rest of $\mathbb C$ by definition of $\Phi$, so we conclude that $\widetilde H$ is $\tilde k$-Lipschitz in $\mathbb C$, with  $\tilde k = (1+C\delta)k<1$ for small enough $\delta>0$. Now $\delta$ is fixed and we define, for $\e\in (0,\delta^2/4)$,
 \begin{align*}
 H_\e(z)=\int_\C \widetilde H(z+\e  \chi(z) y)\rho(y)\, dy,
 \end{align*}
 for a smooth kernel $\rho\geq 0$ with support in $B_1$ 
 and $\int\rho(y) \, dy=1$, and some smooth cut-off function $\chi$ with
 $\mathbf 1_{\mathcal U_{\delta^2/4}}\leq 1- \chi\leq \mathbf 1_{\mathcal U_{\delta^2/2}}$. 
In $\mathcal U_{3\delta^2/4}$, the map $H_\e$ is smooth thanks to the smoothness of $\widetilde H$ in $\mathcal U_{\delta^2}$. 
In $\mathbb{C}\setminus \overline{\mathcal U}_{\delta^2/2}$, we have $H_\e(z)=\int_\C \widetilde H(z+\e y)\rho(y)\, dy$ is also smooth. 
Therefore the map $H_\e$ is smooth in $\mathbb{C}$. 
Further, $H_\e(z)=\widetilde H(z)$ for $z\in\mathcal U_{\delta^2/4}$, and thus agrees with $H$ on $p_+( K )$. 
Finally, denoting by $L$ the Lipschitz constant of $\chi$, we have
 \begin{align*}
 	|H_\e(z)-H_\e(z')|
 	&\leq \int_{B_1}|\widetilde H(z+\e\chi(z)y)-\widetilde H(z'+\e\chi(z')y)|\rho(y)\,dy\\
 	&
 	\leq \int_{B_1}\tilde k(1+\e L|y|)|z-z'|\rho(y)\,dy\leq\tilde k(1+\e L)|z-z'|.
 \end{align*}
 So $H_\e$ is $k_\e$-Lipschitz with $k_\e \leq \hat k (1+\e L)<1$ for small enough $\e$.
  \end{proof}
 
 \begin{lemma}\label{l:bij}
 The map $F\colon z\mapsto \bar z +H(z)$ is a 
 smooth diffeomorphism from $\C$ onto $\C$. Moreover $F$ is $(1+k)$-Lispchitz and $F^{-1}$ is $(1-k)^{-1}$-Lispchitz.
 \end{lemma}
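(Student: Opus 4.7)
The plan is to read off the Lipschitz bounds directly from those of $H$, obtain injectivity and surjectivity by a contraction-mapping fixed-point argument, and deduce smoothness of $F^{-1}$ via the inverse function theorem after checking that the Jacobian of $F$ does not vanish.

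\textbf{Lipschitz bounds.} Since $z\mapsto \bar z$ is an isometry and $H$ is $k$-Lipschitz with $k<1$, for all $z,w\in\C$,
\begin{align*}
(1-k)|z-w|\leq |\overline{z-w}|-|H(z)-H(w)|\leq |F(z)-F(w)|\leq (1+k)|z-w|.
\end{align*}
The right-hand inequality is the $(1+k)$-Lipschitz bound on $F$. The left-hand inequality already gives injectivity, and once surjectivity is established it yields $|F^{-1}(\zeta)-F^{-1}(\zeta')|\leq (1-k)^{-1}|\zeta-\zeta'|$.

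\textbf{Surjectivity via fixed point.} Given $w\in\C$, solving $F(z)=w$ is equivalent to the fixed-point equation $z=T_w(z):=\overline{w-H(z)}$. Since
\begin{align*}
|T_w(z)-T_w(z')|=|H(z)-H(z')|\leq k|z-z'|,
\end{align*}
and $k<1$, the Banach fixed-point theorem provides a unique $z\in\C$ with $T_w(z)=z$, i.e.\ $F(z)=w$. Hence $F$ is a bijection onto $\C$.

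\textbf{Smoothness of $F^{-1}$.} Smoothness of $F$ follows from that of $H$ (Lemma~\ref{l:extH}). For the inverse, I compute in complex form: using Wirtinger derivatives $\partial_z=\tfrac12(\partial_x-i\partial_y)$, $\partial_{\bar z}=\tfrac12(\partial_x+i\partial_y)$, one has $\partial_z F=\partial_z H$ and $\partial_{\bar z}F=1+\partial_{\bar z}H$, so the Jacobian determinant of $F\colon\R^2\to\R^2$ equals
\begin{align*}
J_F=|\partial_z F|^2-|\partial_{\bar z}F|^2 = |\partial_z H|^2-|1+\partial_{\bar z}H|^2.
\end{align*}
The $k$-Lipschitz bound $\|DH\|\leq k$ translates, for the $\C$-linear map $v\mapsto (\partial_z H)v+(\partial_{\bar z}H)\bar v$, into $|\partial_z H|+|\partial_{\bar z}H|\leq k$. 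Therefore
\begin{align*}
|1+\partial_{\bar z} H|\geq 1-|\partial_{\bar z} H|\geq 1-k+|\partial_z H|>|\partial_z H|,
\end{align*}
and in fact $|1+\partial_{\bar z}H|^2-|\partial_z H|^2\geq (1-k)^2$. Hence $J_F\leq -(1-k)^2<0$ everywhere, so the inverse function theorem applies at each point and $F^{-1}$ is smooth on $\C$.

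\textbf{Main obstacle.} Nothing here is delicate once the key algebraic observation $|\partial_z H|+|\partial_{\bar z}H|\leq k<1$ is in hand: it simultaneously gives the non-vanishing Jacobian (via strict domination $|\partial_{\bar z}F|>|\partial_z F|$) and, via the contraction mapping argument, global bijectivity. The potentially subtle point one must verify carefully is precisely this translation between the real Lipschitz bound on $H$ and the complex bound $|\partial_z H|+|\partial_{\bar z}H|\leq k$, but for the surjectivity step only the crude bound $k<1$ on the Lipschitz constant of $H$ is actually used, so the contraction argument is clean.
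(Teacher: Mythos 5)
Your proof is correct and follows essentially the same route as the paper: surjectivity via the contraction $z\mapsto\overline{w-H(z)}$, the two-sided Lipschitz bounds read off directly from the $k$-Lipschitz property of $H$, and smoothness of $F^{-1}$ by the inverse function theorem. The only (minor) difference is how you check that $DF(z)$ is invertible: the paper observes directly that $h\mapsto\bar h+DH(z)h$ has trivial kernel because $\|DH\|<1$ forces $|h|=|\bar h|=|DH(z)h|\leq k|h|$, whereas you compute the Jacobian determinant via Wirtinger derivatives and the identity $\|DH\|=|\partial_z H|+|\partial_{\bar z}H|$; both are sound one-line verifications of the same fact.
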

  \begin{proof}[Proof of Lemma~\ref{l:bij}]
  For any $w\in\C$ the equation 
  \begin{align*}
  w=\bar z +H(z) \quad\Leftrightarrow\quad z =\bar w - \bar H(z),
  \end{align*}
  admits a unique solution $z\in\C$ thanks to the fixed point theorem, since $z\mapsto \bar w-\bar H(z)$ is $k$-Lipschitz and $0\leq k<1$. This shows that $F$ is bijective. The inequalities
  \begin{align*}
  (1-k)|z-z'| \leq |F(z)-F(z')| \leq (1+k) |z-z'|,
  \end{align*}
  follow directly from the fact that $H$ is $k$-Lipschitz and imply the announced Lipschitz constants of $F$ and $F^{-1}$. 
The inverse $F^{-1}$ is smooth thanks to the Inverse Function Theorem, since 
$DF(z)\colon h\mapsto \bar h +DH(z)h$ is invertible for all $z\in\C$ because $\|DH\| <1$.
 \end{proof}
 
 \begin{proof}[Proof of Proposition~\ref{p:G} completed]
 Then, identifying $\C$ with $\R^2$, we define
 \begin{align*}
 G_1(A)=\overline{F^{-1}(A)} -H(F^{-1}(A)),
 \end{align*}
 so that a short calculation shows that
 \begin{align*}
 [z,H(z)]=\left(\begin{array}{c} A \\ iG_1(A)\end{array}\right)\qquad\text{for }A=F(z).
 \end{align*}
 The map $G_1$ is smooth and globally Lipschitz with Lipschitz constant $\Lambda=(1+k)/(1-k)$.
 Moreover, 
for all $A=F(z)$, $A'=F(z')$, we have
 \begin{align*}
 (G_1(A)-G_1(A'))\cdot (A-A') &
 =\det
 \left(\begin{array}{c} A-A' \\ i(G_{1}(A)-G_{1}(A'))\end{array}\right)\\
 & =\det ([z-z',H(z)-H(z')] )\\
 &=|z-z'|^2-|H(z)-H(z')|^2 \\
 &\geq (1-k^2)|z-z'|^2. 
 \end{align*}
 Since  $F$ is $(1+k)$-Lispchitz, this implies
 \begin{align*}
  (G_{1}(A)-G_{1}(A'))\cdot (A-A') 
  \geq \lambda |A-A'|^2,
 \end{align*}
for $\lambda=(1-k^2)/(1+k)^{2}= (1-k)/(1+k)>0$.
This concludes the proof of Proposition~\ref{p:G}.
\end{proof}

\section{Proof of Theorem~\ref{t:main}}\label{s:proof}

\noindent\textbf{Step 1.} We may assume that $u$ is Lipschitz, thanks to the truncation argument of \cite[Proposition~A.1]{FJM02}. 
Let $ K \subset B_{R}\subset\R^{2\times 2}$ for some $R>0$. Then for all $X\in\R^{2\times 2}$ with $|X|>2R$, we have $|X|\leq 2\dist(X, K )$. 
An application of \cite[Proposition~A.1]{FJM02} with $\lambda=2R$ gives $v:B_1\to\R^2$ satisfying
\begin{align*}
	\|Dv\|_{L^{\infty}(B_1)}&\leq 2C_0R,\\
	\int_{B_1}|Du-Dv|^2\,dx &\leq C_0\int_{\{x\in B_1: |Du|>2R\}}|Du|^2\,dx\\
	&\leq 4C_0\int_{B_1}\dist^2(Du, K )\,dx,
\end{align*}
for some constant $C_0$ (depending only on $B_1$). If there exists $M\in K $ such that 
\begin{equation*}
	\int_{B_{1/2}}|Dv-M|^2\,dx\leq \wt C( K )\int_{B_1}\dist^2(Dv, K )\,dx,
\end{equation*}
then repeated applications of the triangle inequality give
\begin{align*}
	\int_{B_{1/2}}|Du-M|^2\,dx&\leq\lt(16\wt C( K )C_0+ 4\wt C( K )+8C_0\rt)\int_{B_1}\dist^2(Du, K )\,dx.
\end{align*}
Thus if Theorem~\ref{t:main} holds for all Lipschitz mappings $v$ for some constant $\wt C( K )$, then it is also valid for all $H^1$ mappings with $C( K )=16\wt C( K )C_0+ 4\wt C( K )+8C_0$.

\medskip

\noindent\textbf{Step 2.} We may assume in addition that $u\in C^2(B_1;\R^2)$ solves
\begin{align}\label{eq:eqGu}
\dv G_1(D u_1) =\dv G_2(D u_2)=0\qquad\text{ in }B_1.
\end{align}
Consider indeed $w\in C^2(B_1)$ such that $w=u$ on $\partial B_1$ and
\begin{align*}
\dv G_1(D w_1)=\dv G_2(D w_2)=0\qquad\text{ in }B_1.
\end{align*}
The existence of such $w$ is guaranteed by the ellipticity of the equation $0=\dv G_j(D w_j)=\mathrm{tr}( DG_j(Dw_j)D^2w_j)=0$, invoking e.g. \cite[Theorem~12.5]{GT}: 
the inequality $\lambda|\xi|^2 \leq DG_j(A)\xi\cdot\xi\leq \Lambda |\xi|^2$, valid for all $A,\xi\in\R^2$ thanks to Proposition~\ref{p:G}, ensures that the eigenvalues of the symmetric part   $[DG_j(A)]_s=(DG_j(A)+DG_j(A)^T)/2$ of $DG_j(A)$ are bounded above and below (since $DG_j(A)\xi\cdot\xi=[DG_j(A)]_s\xi\cdot\xi$ for all $\xi\in\R^2$) and in particular condition (ii) in \cite[Theorem~12.5]{GT} is satisfied.
 Letting $v=u-w$ and using the uniform monotonicity of $G_1$ we find
\begin{align*}
\lambda\int_{B_1}|D v_1|^2\, dx & \leq \int_{B_1} (G_1(D u_1)-G_1(D w_1))\cdot Dv_1\, dx.
\end{align*}
Since $\dv G_1(Dw_1)=0$ and $\dv (iDu_2)=0$ we rewrite this as
\begin{align*}
\lambda\int_{B_1}|D v_1|^2\, dx & \leq \int_{B_1} (G_1(Du_1)+iDu_2)\cdot D v_1\, dx\\
&\leq \frac{1}{2\lambda}\int_{B_1} |G_1(Du_1)+iDu_2|^2\,dx + \frac{\lambda}{2}\int_{B_1}|Dv_1|^2\, dx,
\end{align*}
and infer
\begin{align*}
\int_{B_1}|Dv_1|^2 \, dx &\leq \frac{1}{\lambda^2} \int_{B_1}|G_1(Du_1)+iDu_2|^2\,dx.
\end{align*}
According to Proposition~\ref{p:G} the function $M\mapsto G_1(A)+iB$, where $A,B$ denote the first and second row of the matrix $M$, vanishes on $ K $. Since that function is Lipschitz we deduce that $|G_1(Du_1)+iDu_2|\leq C \dist(Du, K )$, and therefore
\begin{align*}
\int_{B_1}|Dv_1|^2 \, dx &\leq C \int_{B_1}\dist^2(Du, K )\,dx.
\end{align*}
Applying a similar argument to $v_2$ we obtain
\begin{align*}
\int_{B_1}|Dv|^2 \, dx \leq C \int_{B_1}\dist^2(Du, K )\,dx.
\end{align*}
Recalling that $v=u-w$ and using the triangle inequality we deduce
\begin{align*}
\int_{B_1}\dist^2(Dw, K )\, dx & \leq C \int_{B_1}\dist^2(Du, K )\, dx,\\
\int_{B_{1/2}}|Du-M|^2\,dx & 
\leq 2 \int_{B_{1/2}}|Dw-M|^2\, dx + C\int_{B_1}\dist^2(Du, K )\, dx.
\end{align*}
As a consequence, if Theorem~\ref{t:main} is valid for $w$ then we obtain it for $u$. This proves Step~2.

\medskip

\noindent\textbf{Step~3.} As $u_j\in C^2(B_1)$ satisfies \eqref{eq:eqGu}, it is a weak solution of
\begin{equation*}
	\dv\lt(\partial_i(G_j(Du_j))\rt) =\dv\lt(DG_j(Du_j)D(\partial_i u_j)\rt)= 0\qquad\text{for }i=1, 2. 
\end{equation*}
Invoking the De Giorgi-Nash estimates \cite{degiorgi} 
(see e.g. \cite[Theorem~4.11]{hanlin} for the precise statement we use here) 
for $\partial_i u_j$, $i, j\in\{1, 2\}$, we obtain
\begin{equation*}
	\|Du\|_{C^{\alpha}(\overline B_{1/2})} \leq C\|Du\|_{L^2(B_1)}, 
\end{equation*}
for some $\alpha>0$ and some constant $C=C( K )$. 
Thanks to this estimate and the exact rigidity obtained in Lemma~\ref{l:exact}, we may argue exactly as in \cite[Lemma~4.5]{FZ05} to deduce that 
\begin{align}\label{eq:Estrho}
\inf_{M\in K } \| Du-M\|_{L^\infty(B_{1/2})} \leq \rho\left(\int_{B_1}\dist^2(Du, K )\, dx \right),
\end{align}
for some function $\rho$  depending only on $ K $ and satisfying $\rho(\e)\to 0$ as $\e\to 0$.

\medskip

\noindent\textbf{Step~4.} We finally combine Step~3 with the linearized estimate of Lemma~\ref{l:lin}, to obtain our main estimate \eqref{eq:mainest}. 
The basic idea, as in \cite{FJM02,FZ05}, is to linearize $\dist^2(\cdot, K )$ around $M_0\in K $ such that $|Du-M_0|$ is uniformly small. 
When doing so, \eqref{eq:mainest} formally turns into the linearized estimate \eqref{eq:lin} of Lemma~\ref{l:lin}, and it remains to control the error terms.
Due to the modification of $u$ arising from the translation $X\in T_M K $ in the left-hand side of the linearized estimate \eqref{eq:lin},
it is not directly obvious that the error terms are negligible.
In \cite{FJM02} this problem is absent 
because their equivalent of \eqref{eq:Estrho} comes with an explicit $\rho(\e)=C\e^{\frac 14}$.
In \cite{FZ05} it is taken care of via a 
topological degree argument \cite[Proposition~4.7]{FZ05} 
(see also \cite{reshetnyak94}) which allows to avoid the translation.
Here  we present an alternative argument relying on elementary estimates.

We assume without loss of generality that 
\begin{align}\label{eq:eps}
\int_{B_1}\dist^2(Du, K )\, dx =\e \leq\e_0,
\end{align}
where $\e_0=\e_0( K )$ is to be chosen in the course of the proof. 
If \eqref{eq:eps} is not valid then \eqref{eq:mainest} is automatically satisfied for a large enough constant $C$ because the left-hand side of\eqref{eq:mainest} is bounded thanks to Step~1.

We fix $\delta_0>0$ depending only on $ K $, such that the nearest-point projection $\Pi_ K $ onto $ K $ is uniquely defined and smooth in  the neighborhood $\mathcal N_{2\delta_0}( K )$.
We first choose $\e_0$ small enough that  
$\rho(\e_0)\leq \delta_0$, so
 thanks to \eqref{eq:Estrho} the projection $\Pi_ K (Du)$ is well-defined.

We claim that, for every $M\in K $, there exists $Y_M\in K $ such that
\begin{align}\label{eq:linearize}
\int_{B_{1/2}}|Du-Y_M|^2 \, dx
&
\leq  C\int_{B_{1/2}} \dist^2(Du,  K  )\, dx
 + C \int_{B_{1/2}} |Du-M|^4\, dx,\\
 \text{and }\quad
 |M-Y_M|^2 &
 \leq C \int_{B_{1/2}}|Du-M|^2\, dx.
 \nonumber
\end{align}
Here and in the rest of this proof we denote by $C>0$ a generic constant depending only on $ K $.

To prove \eqref{eq:linearize}, we first invoke
 Lemma~\ref{l:lin}, according to which we have
\begin{align*}
\inf_{X\in T_M K }\int_{B_{1/2}}|Du-M-X|^2\, dx & \leq  C\int_{B_{1/2}} \dist^2(Du-M,T_M  K  )\, dx.
\end{align*}
Choosing $X=X_M\in T_M K $ attaining the infimum in the left-hand side, we obtain
\begin{align}\label{eq:linearize1}
\int_{B_{1/2}}|Du-M-X_M|^2\, dx & \leq  C\int_{B_{1/2}} \dist^2(Du-M,T_M  K  )\, dx.
\end{align}
Moreover the minimizing property of $X_M$ implies 
 that $\int_{B_{1/2}}(Du-M-X_M)\, dx$ is orthogonal to $X_M$. We deduce
\begin{align*}
|X_M|^2
& = \fint_{B_{1/2}} X_M \cdot (Du-M)\, dx \leq \frac 12 |X_M|^2 +C \int_{B_{1/2}} |Du-M|^2\, dx,
\end{align*}
and therefore
\begin{align}\label{eq:XM}
|X_M|^2\leq C \int_{B_{1/2}} |Du-M|^2\, dx.
\end{align}
Recalling from the proof of Lemma~\ref{l:lin} that $P_M$ denotes the orthogonal projection onto $(T_M K )^\perp$, we estimate the integrand in the right-hand side of \eqref{eq:linearize1} using
\begin{align*}
\dist(Du-M,T_M  K  ) & =|P_M(Du-M)|\\
&
\leq |Du-\Pi_ K (Du)| + |\Pi_ K (Du)-M-(I-P_M)(Du-M)| \\
&\leq |Du-\Pi_ K (Du)| + C |Du-M|^2.
\end{align*}
The last inequality follows from the fact that $I-P_M=D\Pi_ K (M)$. Since 
$|Du-\Pi_ K (Du)|=\dist(Du, K )$, plugging this into \eqref{eq:linearize1} we find
\begin{align}\label{eq:linearize2}
\int_{B_{1/2}}|Du-M-X_M|^2\, dx 
 & \leq  C\int_{B_{1/2}} \dist^2(Du,  K  ) \, dx
 + C \int_{B_{1/2}} |Du-M|^4\, dx
\end{align}
Now we may choose $Y_M\in K $ such that
\begin{align}\label{eq:YM}
|M+X_M -Y_M|\leq C |X_M|^2.
\end{align}
Indeed, if $|X_M|\leq \delta_0$ then one can simply take $Y_M=\Pi_{ K }(M+X_M)$ and use the fact that $D\Pi_ K (M)X_M=(I-P_M)X_M=X_M$, and if $|X_M|\geq \delta_0$ one may take $Y_M=M$. From \eqref{eq:linearize2} and \eqref{eq:YM} we infer
\begin{align*}
\int_{B_{1/2}}|Du-Y_M|^2 \, dx
&
\leq  C\int_{B_{1/2}} \dist^2(Du,  K  )\, dx
 + C \int_{B_{1/2}} |Du-M|^4\, dx +C |X_M|^4.
\end{align*}
Using \eqref{eq:XM} and Cauchy-Schwarz to estimate the last term, we deduce the first inequality in \eqref{eq:linearize},
and the estimate on $|M-Y_M|$ in  \eqref{eq:linearize} follows from \eqref{eq:YM} and \eqref{eq:XM} (taking into account that $|X_M|\leq C$ thanks to \eqref{eq:XM} and Step~1).

Our goal is to find $M\in K $ for which we can discard the last term in \eqref{eq:linearize}.
To that end, we apply \eqref{eq:linearize} to define by induction
a sequence $(M_j)\subset K $ satisfying
\begin{align*}
M_{j+1}=Y_{M_j}\qquad\forall j\geq 0,
\end{align*}
and $M_0$ to be chosen later. According to \eqref{eq:linearize} we have
\begin{align*}
\int_{B_{1/2}}|Du-M_{j+1}|^2 \, dx
&
\leq  C\int_{B_{1/2}} \dist^2(Du,  K  )\, dx
 + C \int_{B_{1/2}} |Du-M_j|^4\, dx,\\
 \text{and }\quad
 |M_j-M_{j+1}|^2 &
 \leq C \int_{B_{1/2}}|Du-M_j|^2\, dx,
\end{align*}
Therefore, setting
\begin{align}\label{eq:qsj}
q_j=\int_{B_{1/2}}|Du-M_j|^2\quad\text{and}\quad s_j=\sup_{B_{1/2}}|Du-M_j|,
\end{align}
and recalling the definition of $\e=\int_{B_{1/2}}\dist^2(Du, K )\, dx$ \eqref{eq:eps}, we find
\begin{align}\label{eq:qsj+1}
q_{j+1} \leq C \e + C  s_j^2 q_j
\quad\text{and}\quad
s_{j+1} \leq  s_j + C q_j^{\frac 12}.
\end{align}
We wish to find $j_0\geq 0$ such that
\begin{align}\label{eq:qj0}
q_{j_0} \leq K\e,
\end{align}
for some constant $K>0$ depending only on $ K $. Recalling the definitions of $q_j$ \eqref{eq:qsj} and $\e$ \eqref{eq:eps}, this concludes the proof of \eqref{eq:mainest}.
We prove by induction on $j_0$ that, if $s_0\leq 1/(2K)$ 
 and 
\begin{align}\label{eq:indhyp1}
q_{j}> K\e\qquad\text{for }j=0,\ldots,j_0,
\end{align}
then
\begin{align}\label{eq:indhyp2}
q_j\leq (Ks_0)^{2j}s_0^2 \qquad\text{for }j=0,\ldots,j_0.
\end{align}
The constant $K>0$ depends only on $ K $ and will be adjusted in the course of the proof.
Initialization at $j_0=0$ is obvious since $ |B_{1/2}|=\pi/4\leq 1$.
Next we assume \eqref{eq:indhyp1}-\eqref{eq:indhyp2} and prove that either \eqref{eq:qj0} or \eqref{eq:indhyp2} is satisfied at $j_0+1$.
Combining \eqref{eq:indhyp2} with the second inequality in \eqref{eq:qsj+1}, we find
\begin{align*}
s_{j_0}\leq s_0 + C  s_0 \sum_{j=0}^{j_0-1}(Ks_0)^{j} \leq (1+C  )s_0,
\end{align*}
provided $2Ks_0\leq 1$. 
Therefore the first inequality in \eqref{eq:qsj+1} gives
\begin{align*}
q_{j_0+1} \leq C\e +C(1+C)^2 s_0^2 q_{j_0},
\end{align*}
and using \eqref{eq:indhyp2} we obtain
\begin{align*}
q_{j_0+1} &\leq C\e +C(1+C)^2  s_0^4 (Ks_0)^{2j_0}  \\
& =C\e +\frac{C(1+C)^2 }{K^2} (Ks_0)^{2j_0+2}s_0^2.
\end{align*}
Therefore we have
\begin{align*}
\text{either }q_{j_0+1}\leq 2C\e,
\quad\text{or}\quad
q_{j_0+1}\leq 2\frac{C(1+C)^2}{K^2} (Ks_0)^{2j_0+2}s_0^2.
\end{align*}
In the first case \eqref{eq:qj0} is satisfied for $j=j_0+1$ provided $K\geq 2C$, and in the second case \eqref{eq:indhyp2} is satisfied at $j_0+1$, provided 
$K\geq (2C)^{1/2}(1+C)$. 
This concludes the proof of \eqref{eq:indhyp2} by induction.

As a consequence, assuming by contradiction that there is no $j_0\geq 0$ satisfying \eqref{eq:qj0}, we would have
\begin{align*}
q_j > K\e\quad\text{and}\quad q_j\leq (Ks_0)^{2j}s_0^2\qquad\forall j\geq 0.
\end{align*} 
 As $s_0\leq 1/(2K)$, this implies $q_j\to 0$ and, passing to the limit in $q_j>K\e$ we deduce $\e=0$. 
But in that case by Lemma~\ref{l:exact} we have $Du\equiv M_0$ and this choice of $M_0$ gives $q_0=0$.

We conclude that there exists $j_0=j_0(\e)$ satisfying \eqref{eq:qj0}, if we can choose $M_0$ in such a way that $s_0\leq 1/(2K)$. But thanks to \eqref{eq:Estrho} we do have $s_0\leq\rho(\e)\leq 1/(2K)$ provided $\e_0$ is   small enough.
\qed

\section{A $3\times 3$ counter-example}\label{s:3x3}

In this section
we prove that the two-dimensional setting of Theorem~\ref{t:main} is optimal in the following sense: 
 a connected $1$-submanifold of $\R^{3\times 3}$ which has no rank-one connection  and is elliptic 
may not satisfy Sverak's compactness result \cite{sverak93}, and even less a quantitative rigidity  estimate. 

\begin{proposition}\label{p:3x3T4}
There exists a closed compact $1$-submanifold $\Pi \subset\R^{ 3\times 3}$ which is elliptic and has no rank-one connection, but contains a $\mathcal T_4$ configuration.
\end{proposition}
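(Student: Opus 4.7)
The plan is to construct $\Pi$ as a smooth closed embedded curve in $\R^{3\times 3}$ passing through four matrices $A_1,\ldots,A_4$ that form a $T_4$ configuration, with the connecting arcs chosen to avoid rank-one connections and preserve ellipticity. The construction proceeds in two main steps: first, embed a standard $T_4$ from $\R^{2\times 2}$ into $\R^{3\times 3}$; second, connect the four resulting matrices by smooth arcs exploiting the high codimension of the rank-one variety in $\R^{3\times 3}$.

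For the first step I would take any of the well-known explicit $T_4$ configurations $\{A_i'\}_{i=1}^4 \subset \R^{2\times 2}$ (see e.g.\ the discussion and references in \cite{BFJK94,K03}) and set $A_i = \begin{pmatrix} A_i' & 0 \\ 0 & 0 \end{pmatrix} \in \R^{3\times 3}$. Since the pairwise differences of the $A_i'$ are of rank $2$, the same holds for the $A_i$, so $\{A_1,\ldots,A_4\}$ has no rank-one connections. The rank-one matrices realizing the $T_4$ laminations in $\R^{2\times 2}$ embed in the same block fashion and remain rank-one in $\R^{3\times 3}$, so $\{A_1,\ldots,A_4\}$ is again a $T_4$.

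For the second step the decisive observation is a dimension count: the variety $\Sigma_1 \subset \R^{3\times 3} \cong \R^9$ of matrices of rank at most one has dimension $3+3-1 = 5$, hence codimension $4$. For a smooth simple closed curve $\gamma \colon \TT \to \R^{3\times 3}$ through $A_1,\ldots,A_4$ in cyclic order, the secant map $(s,t) \mapsto \gamma(t)-\gamma(s)$ on $\TT^2 \setminus \{s=t\}$ has $2$-dimensional domain, and by a standard transversality argument a generic such $\gamma$ has its image under this map disjoint from $\Sigma_1$ (since $2 + 5 < 9$). The same count shows that $\gamma'(t)$ can be kept away from $\Sigma_1 \setminus \{0\}$ for all $t$. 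Concretely, I would connect the $A_i$ by four smooth arcs that activate the third row and column of the matrix, use that extra room to dodge the forbidden cone, and smooth at the corners to obtain a $C^\infty$ submanifold $\Pi$ with no rank-one connections and no rank-one tangent.

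Ellipticity for a $1$-submanifold in $\R^{n\times n}$ reduces, on the compact set $\Pi$, to a uniform positive lower bound on a suitable continuous invariant of the unit secant $(M-M')/|M-M'|$, $M \neq M'$ in $\Pi$ (the natural higher-dimensional analogue of the $\det$ appearing in \eqref{eq:ellipt}), which extends continuously to the diagonal as the analogous invariant of the unit tangent direction. By the previous step this invariant is strictly positive everywhere on the relevant compact set, so compactness yields the required uniform bound. The main obstacle is the explicit $C^\infty$ matching of the four connecting arcs at the corners $A_i$, which is a routine smoothing; the deeper feature enabling the whole construction is that $\Sigma_1$ has codimension $4$ in $\R^{3\times 3}$ but only codimension $1$ in $\R^{2\times 2}$, so the secant dimension count opens up room for generic curves to avoid $\Sigma_1$ only in the higher-dimensional setting, in agreement with the positive result of Theorem~\ref{t:main}.
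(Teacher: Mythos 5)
Your proposal takes a genuinely different route from the paper's. The paper constructs an explicit parametrized curve $\Gamma_a(\theta)$ (equation \eqref{eqb22.4}) passing through a hand-built $\mathcal T_4$ configuration in $\R^{3\times 3}$ and then verifies, by computing $2\times 2$ minors and elementary trigonometric identities, that the tangent $\Gamma_a'(\theta)$ always has rank $\geq 2$ (Lemma~\ref{l:Piaellipt}) and that no secant $\Gamma_a(\theta)-\Gamma_a(\theta')$ has rank $1$ (Lemma~\ref{L:PianoR1}), using an auxiliary reparametrization $\rho$ (Lemma~\ref{l:rho}) to break a few residual degeneracies. You instead appeal to a dimension count and transversality: the rank-$\leq 1$ variety $\Sigma_1\subset\R^{3\times 3}$ has codimension $4$, which exceeds the domain dimension of both the tangent map ($1$) and the secant map ($2$), whereas in $\R^{2\times 2}$ that codimension is only $1$. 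This neatly exposes the structural reason the counterexample exists precisely in the $3\times 3$ setting, which the paper's explicit construction conceals; the paper's approach, on the other hand, yields a concrete curve one can compute with and requires no transversality machinery.

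As written, though, your transversality step has a gap that needs attention before the argument closes. The domain $\TT^2\setminus\{s=t\}$ of the secant map is not compact, and $\gamma(t)-\gamma(s)$ accumulates at $0\in\Sigma_1$ along the diagonal, so you cannot simply assert that a generic $\gamma$ has its image disjoint from $\Sigma_1$ on that open surface. The correct order is: first secure the tangent condition $\mathrm{Rank}\,\gamma'(t)\geq 2$ by the $1$-dimensional count; then note that the normalized secant $(\gamma(t)-\gamma(s))/|\gamma(t)-\gamma(s)|$ extends smoothly across the diagonal (with boundary value the normalized tangent) once one blows up the diagonal, thereby compactifying the domain; and only then invoke parametric transversality to miss $\Sigma_1\cap S^8$. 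You should also verify that perturbations constrained to fix $\gamma(\hat\theta_k)=A_k$ still act with full rank on the secant map; this holds because the only secants pinned at both ends are the differences $A_j-A_k$, which are rank $2$ by hypothesis and hence already outside $\Sigma_1$. Finally, for a curve in $\R^{3\times 3}$ the paper's working definition of ``elliptic'' is the pointwise tangent condition $\mathrm{Rank}\,\Gamma_a'(\theta)>1$ (Lemma~\ref{l:Piaellipt}), not a quantitative secant bound in the spirit of \eqref{eq:ellipt}; the tangent avoidance of $\Sigma_1$ that your count already addresses is exactly what is needed, and your separate paragraph about a ``suitable continuous invariant of the unit secant'' is a distraction. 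With the blow-up step and the parametric transversality set up carefully, your argument would give a valid alternative proof.
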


By a known construction, see e.g.  \cite[Theorem~3.1]{BFJK94}, Proposition~\ref{p:3x3T4} implies the existence of a sequence of   maps $u_k\colon \R^3\to\R^3$ such that 
\begin{align*}
\int_{B_1}\dist^2(Du_k,\Pi)\, dx\to 0\qquad\text{as }k\to 0,
\end{align*}
 but $(Du_k)$ is not precompact in $L^2(B_{1/2})$. 
 In particular, one certainly cannot hope for a quantitative estimate
 \begin{align*}
 \inf_{M\in\Pi}\int_{B_{1/2}}|Du_k-M|^2\, dx \leq \rho\left( \int_{B_1} \dist^2(Du_k,\Pi)\, dx\right),
 \end{align*}
for any function $\rho(\e)\to 0$ as $\e\to 0$.

\medskip

Proposition~\ref{p:3x3T4} is a consequence of the construction below.
Let $a>0$ and define matrices $T_1,T_2,T_3,T_4$ by
\begin{align*}
T_1 &
=-T_3
=\lt(\begin{array}{ccc}  1+a & 0 & 0 \\ 0 & 1 & 0\\ 1+a & 0  & 0\end{array}\rt), 
&
T_2
&=
-T_4
=\lt(\begin{array}{ccc}  -1 & 0 & 0 \\ 0 & 1+a & 0\\ -1 & 0 & 0 \end{array}\rt),
\end{align*}
%
and $C_1,C_2,C_3,C_4$ by
\begin{align*}
C_1
&=
-C_3
=\lt(\begin{array}{ccc}  1 & 0 & 0 \\ 0 & 1 & 0\\ 1 & 0 & 0  \end{array}\rt),
& 
C_2
&
=-C_4
=\lt(\begin{array}{ccc}  -1 & 0 & 0  \\ 0 & 1 & 0 \\ -1 & 0 & 0  \end{array}\rt).
\end{align*}
%
We have that $T_k-C_k$ is rank-one for $k=1,2,3,4$, and 
(with the convention that $C_5=C_1$)
%
\begin{align*}
&T_k-\frac{2+a}{a}\lt(T_k-C_k\rt)=C_{k+1} \qquad\text{ for }k=1,2,3,4,
\end{align*}
so $\lbrace T_1,T_2,T_3,T_4\rbrace$ forms a $\mathcal T_4$ configuration.
Next we construct a curve $\Pi=\Pi_a$ as in Proposition~\ref{p:3x3T4}, which contains this $\mathcal T_4$ configuration.
Let $\theta_a=\arctan (1/(1+a))$, so
\begin{align*}
\cos\theta_a &=\frac{1+a}{r_a},\qquad\sin\theta_a =\frac{1}{r_a},\qquad
r_a=\sqrt{1+(1+a)^2}.
\end{align*}
Let $\rho\colon \mathbb{R}\rightarrow \mathbb{R}$ be a smooth monotonically increasing function to be determined later that satisfies 
\begin{itemize}
\item $\rho\lt(\theta+2\pi\rt)=\rho\lt(\theta\rt)+2\pi$ for $\theta\in\R$,
\item $\rho\lt(\hat \theta_k\rt)=\hat \theta_k$ for
$\hat \theta_k =\theta_a +(k-1)\frac\pi 2$, $k=1,2,3,4$.
\end{itemize}
Define $\Pi_a=\Gamma_a(\R/2\pi\Z)$, with
\begin{align}
\label{eqb22.4}
\Gamma_a(\theta):=\lt(\begin{array}{ccc}  r_a \cos\lt(\theta\rt) & -\sin\lt(8\theta-8\theta_a\rt)  &  \sin\lt(6\rho\lt(\theta\rt)-6\theta_a\rt)   \\ \sin\lt(6\theta-6\theta_a\rt) &  r_a \sin\lt(\theta\rt) & \sin\lt(8\rho\lt(\theta\rt)-8\theta_a\rt) \\  r_a \cos\lt(\theta\rt) &    \sin\lt(8\theta-8\theta_a\rt)  &  \sin\lt(6\rho\lt(\theta\rt)-6\theta_a\rt) \end{array}\rt)
\end{align}
Then we have
\begin{align*}
T_k=\Gamma_a(\hat \theta_k)\text{ for }k=1,2,3,4,
\end{align*}
so $\Pi_a$ contains the  $\mathcal T_4$ configuration $\lbrace T_1,T_2,T_3,T_4\rbrace$. 
Next we adjust the parameter $a>0$ and the function $\rho$ in order to ensure that $\Pi_a$ has no rank-one connection and is elliptic.

\medskip

\paragraph{Notation.} With the $M_{i_1i_2,j_1j_2}$ minor we mean the determinant of the $2\times 2$ submatrix corresponding to the rows $i_1,i_2$ and columns $j_1,j_2$.

\medskip

\begin{lemma}
\label{l:Piaellipt}
If $a>0$ is such that $\theta_a\notin\frac{\pi}{48}\Z$, the curve $\Pi_a$ is elliptic, i.e. $\mathrm{Rank}\:\Gamma_a'(\theta) > 1$ for all $\theta\in\R$. 
\end{lemma}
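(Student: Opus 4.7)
My plan is to verify the rank condition by differentiating \eqref{eqb22.4} and exhibiting, for every $\theta$, a nonzero $2\times 2$ minor of $\Gamma_a'(\theta)$. Write $R_1,R_2,R_3$ for the three rows. The key structural observation, available as soon as one writes out $\Gamma_a'(\theta)$, is that $R_1$ and $R_3$ share their first and third entries and have opposite second entries; consequently
\begin{align*}
R_1-R_3 &= (0,\,-16\cos(8\theta-8\theta_a),\,0),\\
R_1+R_3 &= 2\bigl(-r_a\sin\theta,\,0,\,6\rho'(\theta)\cos(6\rho(\theta)-6\theta_a)\bigr),
\end{align*}
whose supports on the coordinate axes are disjoint; thus $R_1,R_3$ are linearly independent as long as \emph{both} $R_1-R_3$ and $R_1+R_3$ are nonzero.

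\textbf{Case 1: $\cos(8\theta-8\theta_a)\neq 0$.} Then $R_1-R_3\neq 0$. If also $R_1+R_3\neq 0$, $R_1$ and $R_3$ are independent and the rank is already at least $2$. Otherwise $R_1+R_3=0$, which forces $\sin\theta=0$, hence $\theta\in\pi\Z$; the span of $R_1,R_3$ reduces to the line in the direction of the second coordinate axis, so I need the first entry of $R_2$ to be nonzero. At $\theta\in\pi\Z$ that first entry equals $6\cos(6\theta_a)$, and $\cos(6\theta_a)=0$ would force $\theta_a\in\tfrac{\pi}{12}+\tfrac{\pi}{6}\Z\subset\tfrac{\pi}{48}\Z$, excluded by hypothesis.

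\textbf{Case 2: $\cos(8\theta-8\theta_a)=0$.} Then $R_1=R_3$, so the question reduces to the independence of $R_1$ and $R_2$. Since $R_1$'s second entry vanishes while $R_2$'s is $r_a\cos\theta$, independence holds whenever $\cos\theta\neq 0$ and $R_1\neq 0$. The potentially bad subcases are therefore (i) $\cos\theta=0$, or (ii) $R_1=0$ (which in particular requires $\sin\theta=0$). Combined with $\cos(8\theta-8\theta_a)=0$, a direct computation shows each forces $\theta_a$ into the odd multiples of $\tfrac{\pi}{16}$, a subset of $\tfrac{\pi}{48}\Z$, excluded by hypothesis.

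\textbf{The main thing to monitor} is subcase (ii): if $R_1=R_3=0$ the whole matrix has rank at most $1$, so the argument really does need to rule this out. The hypothesis $\theta_a\notin\tfrac{\pi}{48}\Z$ is doing real work at exactly those configurations. Notice that the reparametrisation $\rho$ plays no essential role in the case analysis: it enters only through third-column entries, and the arithmetic relations between $\theta$, $\theta_a$, and the frequencies $6,8$ already suffice to deliver ellipticity.
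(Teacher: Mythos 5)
Your proof is correct and is essentially the paper's argument in a different packaging: the paper adds and subtracts the $M_{12,12}$ and $M_{23,12}$ minors, which is exactly your $R_1\pm R_3$ decomposition, and both arguments close by reducing to the non-vanishing of $\cos(6\theta_a)$ and $\cos(8\theta_a)$ under the hypothesis $\theta_a\notin\frac{\pi}{48}\Z$. Your explicit case analysis (including the check that $R_1=R_3=0$ cannot occur) is sound and complete.
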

\begin{proof} 
The derivative $\Gamma_a'$ is given by
\begin{align*}
\Gamma_a'(\theta):=\lt(\begin{array}{ccc}  -r_a \sin\lt(\theta\rt) & -8\cos\lt(8\theta-8\theta_a\rt)  &  6\rho'(\theta)\cos\lt(6\rho\lt(\theta\rt)-6\theta_a\rt)   \\ 6\cos\lt(6\theta-6\theta_a\rt) &  r_a \cos\lt(\theta\rt) &    8\rho'(\theta)\cos\lt(8\rho\lt(\theta\rt)-8\theta_a\rt)   \\  -r_a \sin\lt(\theta\rt) &    8\cos\lt(8\theta-8\theta_a\rt)  &  6\rho'(\theta)\cos\lt(6\rho\lt(\theta\rt)-6\theta_a\rt) \end{array}\rt)
\end{align*}
Assume  $\mathrm{Rank}\:\Gamma_a'(\theta)\leq 1$ for some $\theta\in\R$.
Then calculating the $M_{12,12}$ minor we have 
\begin{align*}
 -r_a^2 \sin\lt(\theta\rt)\cos\lt(\theta\rt) +48 \cos\lt(8\theta-8\theta_a\rt)\cos\lt(6\theta-6\theta_a\rt)=0
\end{align*}
and calculating the $M_{23,12}$ minor we have 
\begin{align*}
48 \cos\lt(8\theta-8\theta_a\rt)\cos\lt(6\theta-6\theta_a\rt)+ r_a^2
\sin\lt(\theta\rt)\cos\lt(\theta\rt)  =0.
\end{align*}
Adding  and substracting these two equations we obtain that 
\begin{align*}
  \sin\lt(\theta\rt)\cos\lt(\theta\rt)=0\text{ and }\cos\lt(8\theta-8\theta_a\rt)\cos\lt(6\theta-6\theta_a\rt)=0.
\end{align*}
The first equality implies  $\theta\in\frac\pi 2\Z$, and then the second equality becomes
\begin{align*}
\cos(8\theta_a)\cos(6\theta_a)=0,
\end{align*}
which is impossible for $\theta_a\notin \frac{\pi}{48}\Z$.
\end{proof}

\medskip

\begin{lemma}
\label{l:rho}
For any $a>0$ such that $\theta_a\notin\frac\pi{24}\Z$, for any  $\e>0$,   there exists a smooth monotonic function $\rho\colon \mathbb{R}\rightarrow \mathbb{R}$  with the following properties 
\begin{itemize}
\item $\rho\lt(\theta+2\pi\rt)=\rho\lt(\theta\rt)+2\pi$ for $\theta\in\R$,
\item $\rho\lt(\hat \theta_k\rt)=\hat \theta_k$ for
$\theta_k =\hat \theta_a +(k-1)\frac\pi 2$, $k=1,2,3,4$.
\item For any $\theta, \theta'\in [0,2\pi)\cap \frac{\pi}{24}\mathbb{Z}$, we have $\rho(\theta)-\rho(\theta')\notin \frac\pi {12}\Z$
\item $\sup_{x\in \mathbb{R}}\lt|\rho(x)-x\rt|<\e$
\end{itemize}
\end{lemma}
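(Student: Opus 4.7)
The plan is to realize $\rho$ as a small smooth $2\pi$-periodic perturbation of the identity, parameterized by its $48$ values on the finite set $F:=[0,2\pi)\cap\frac{\pi}{24}\Z$, and chosen generically.

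First I would observe that the four prescribed fixed points $\hat\theta_k=\theta_a+(k-1)\pi/2$ are all disjoint from $F$: since $\pi/2=12\pi/24\in\frac{\pi}{24}\Z$, the inclusion $\hat\theta_k\in\frac{\pi}{24}\Z$ would force $\theta_a\in\frac{\pi}{24}\Z$, contradicting the hypothesis. Thus $F\cup\{\hat\theta_1,\dots,\hat\theta_4\}$ consists of $52$ distinct points in $\R/2\pi\Z$, and one can fix $\eta>0$ (say $\eta<\pi/48$) such that their $\eta$-neighborhoods, viewed in $\R/2\pi\Z$, are pairwise disjoint.

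Next, fix a smooth bump $\phi\in C_c^\infty((-\eta,\eta))$ with $\phi(0)=1$, and for a parameter vector $t=(t_\theta)_{\theta\in F}\in\R^{F}$ define
\begin{align*}
\rho_t(x) \,=\, x \,+\, \sum_{\theta\in F}\sum_{n\in\Z} t_\theta\,\phi(x-\theta-2\pi n).
\end{align*}
By construction $\rho_t$ is smooth and $\rho_t(x+2\pi)=\rho_t(x)+2\pi$. Because the supports of the bumps are disjoint from neighborhoods of the $\hat\theta_k$, one has $\rho_t(\hat\theta_k)=\hat\theta_k$; and by disjointness of the supports around distinct points of $F$, one has $\rho_t(\theta)=\theta+t_\theta$ for each $\theta\in F$. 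Moreover $\|\rho_t-\mathrm{id}\|_\infty\leq \|\phi\|_\infty\|t\|_\infty<\e$ and $\rho_t'=1+O(\|t\|_\infty\|\phi'\|_\infty)>0$ once $\|t\|_\infty$ is small enough, so smoothness, monotonicity, periodicity, the sup bound and the fixed-point conditions are all satisfied simultaneously.

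It remains to choose $t$ so that the third property holds; that is, writing $y_\theta:=\rho_t(\theta)=\theta+t_\theta$, we want $y_\theta-y_{\theta'}\notin\frac{\pi}{12}\Z$ for all distinct $\theta,\theta'\in F$. This is equivalent to excluding, for each ordered distinct pair $(\theta,\theta')\in F\times F$ and each $m\in\Z$, the affine hyperplane $\{t\in\R^{F}\colon t_\theta-t_{\theta'}=m\pi/12-(\theta-\theta')\}$. This is a countable union of hyperplanes in $\R^{F}\simeq\R^{48}$, hence has empty interior (and Lebesgue measure zero), so we can pick $t$ avoiding it with $\|t\|_\infty$ arbitrarily small, completing the construction. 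The only real (and very mild) obstacle in the argument is the initial disjointness claim — without the hypothesis $\theta_a\notin\frac{\pi}{24}\Z$ the fixed-point constraints and the genericity constraints would clash, but that hypothesis is exactly what decouples them.
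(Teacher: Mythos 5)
Your construction is essentially the paper's: both perturb the identity by a sum of small bumps supported near the lattice points $j\pi/24$ (disjoint from the $\hat\theta_k$ precisely because $\theta_a\notin\frac{\pi}{24}\Z$), then select the amplitudes generically so that the values $\rho(j\pi/24)$ avoid the forbidden $\frac{\pi}{12}$-differences. The only cosmetic difference is how that generic choice is made — the paper picks the $t_j$ one at a time inductively (a discrete set to avoid at each step), while you pick the whole vector $t$ at once by excluding a null union of hyperplanes — and one small imprecision: the parenthetical ``(say $\eta<\pi/48$)'' is not by itself enough, since $\eta$ must also be less than $\dist(\lbrace\hat\theta_k\rbrace,\frac{\pi}{24}\Z)$, though your preceding clause ``one can fix $\eta>0$ such that \ldots'' already has it right.
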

\begin{proof} 
Let $\delta=\frac 12 \dist(\theta_a,\frac\pi{24}\Z) = \frac 12 \dist(\lbrace \hat\theta_k\rbrace,\frac\pi{24}\Z) >0$ and fix a smooth function $\varphi$ such that $\supp\varphi\subset (-\delta,\delta)$, $0\leq\varphi \leq\varphi(0)=1$ and $|\varphi'|\leq 2/\delta$.
Define $\rho$ on $[0 ,2\pi)$ by setting
\begin{align*}
\rho(\theta)=\theta +\sum_{j=1}^{47}t_j\,\varphi\left(\theta-j\frac{\pi}{24}\right)\qquad\text{ for }\theta\in [0 ,2\pi),
\end{align*}
where $t_1,\ldots,t_{47}\in (-\eta,\eta)$ are to be fixed later and $\eta=\min(\e,\delta/2)$.
The choice of $\delta>0$ ensures that $\rho(\hat\theta_k)=\hat\theta_k$,  also since $|t_j|<\e$ we have $|\rho-id|<\e$, and 
finally since $|t_j|<\delta/2$ we have $\rho'>0$ on $[0,2\pi )$. Moreover the function $\rho-id$ is identically zero near $0$ and $2\pi$, so it can be extended to a smooth $2\pi$ periodic function, thus yielding  a smooth monotonic extension $\rho\colon \R\to\R$ satisfying $\rho(\theta+2\pi)=\rho(\theta)+2\pi$ for $\theta\in\R$.
It remains to argue that we can pick $t_1,\ldots,t_{47}\in [0,\eta)$ to ensure that the third condition in Lemma~\ref{l:rho} is satisfied.

Denote $t_0=0$. 
By induction, we may for each $j=1,\ldots,47$ choose $t_j\in (-\eta,\eta)$ to ensure that  
\begin{align*}
\rho(j\pi/24)-\rho(\ell\pi/24)=t_j-t_\ell + (j-\ell)\pi/24 \notin \frac\pi {12}\Z
\end{align*}
for all $\ell\in\lbrace 0,\ldots,j-1\rbrace$. This is possible because at each step there is only a discrete set of values of $t_j$ to avoid.
\end{proof}

\begin{lemma} 
\label{L:PianoR1}
If $a>0$ is such that $\theta_a\notin \frac{\pi}{48}\Z$,
 $\e>0$ is small enough
 and $\rho$ is as in Lemma~\ref{l:rho},
  then 
the curve $\Pi_a\subset\R^{3\times 3}$ does not contain Rank-$1$ connections. 
\end{lemma}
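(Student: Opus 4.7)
The plan is to reduce the non-existence of rank-$1$ connections in $\Pi_a$ to a \emph{finite} algebraic problem, and then to break the remaining dangerous configurations using the specific properties of $\rho$ from Lemma~\ref{l:rho}. First I would dispose of pairs close on the circle: by the ellipticity $\mathrm{Rank}\:\Gamma_a'(\theta)\geq 2$ of Lemma~\ref{l:Piaellipt}, a standard compactness argument on $\R/2\pi\Z$ produces $\delta_0>0$ (independent of $\e$) such that $\mathrm{Rank}\lt(\Gamma_a(\theta)-\Gamma_a(\theta')\rt)\geq 2$ whenever $0<\mathrm{dist}_{S^1}(\theta,\theta')<\delta_0$. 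It then suffices to rule out rank-$1$ connections at pairs with $\mathrm{dist}_{S^1}(\theta,\theta')\geq\delta_0$.

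For such a pair, set $D=\Gamma_a(\theta)-\Gamma_a(\theta')$. The special form of \eqref{eqb22.4} yields the structural identities $D_{31}=D_{11}$, $D_{33}=D_{13}$ and $D_{32}=-D_{12}$, so the minors $M_{13,12}=-2D_{11}D_{12}$ and $M_{13,23}=2D_{12}D_{13}$ must vanish. Accordingly, $\mathrm{Rank}(D)\leq 1$ splits into two scenarios: (A) $D_{12}=D_{22}=0$ with $D_{11}D_{23}=D_{13}D_{21}$, or (B) $D_{11}=D_{13}=D_{21}=D_{23}=0$ with $D_{12}\neq 0$ (the borderline case where rows $1$ and $3$ vanish simultaneously is absorbed in the same analysis).

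Next I would use sum-to-product identities on the $\rho$-independent conditions $D_{11}=0,\,D_{12}=0,\,D_{21}=0,\,D_{22}=0$ to locate the candidate pairs $(\theta,\theta')$. In scenario (A), $D_{22}=0$ with $\theta\neq\theta'$ forces $\theta+\theta'\equiv\pi\pmod{2\pi}$, and then $D_{12}=0$ becomes $2\sin(8\theta)\cos(8\theta_a)=0$; the standing hypothesis $\theta_a\notin\frac{\pi}{48}\Z$ gives $\cos(8\theta_a)\neq 0$, whence $\theta\in\frac{\pi}{8}\Z\subset\frac{\pi}{24}\Z$. An analogous computation in scenario (B) yields $\theta\in\frac{\pi}{6}\Z\subset\frac{\pi}{24}\Z$. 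The role of $\theta_a\notin\frac{\pi}{48}\Z$ is precisely to exclude all degenerate branches in which some $\cos(k\theta_a)$ vanishes or one of the auxiliary ``$\theta+\theta'$''-constraints could be satisfied; the offending values of $\theta_a$ always lie in $\frac{\pi}{48}\Z$.

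Finally, at such candidate pairs $\theta,\theta'\in\frac{\pi}{24}\Z$ the remaining rank-$1$ conditions take the form $\sin(k\rho(\theta)-k\theta_a)-\sin(k\rho(\theta')-k\theta_a)=0$ for $k\in\{6,8\}$, and each requires either $\rho(\theta)-\rho(\theta')\in\frac{2\pi}{k}\Z\subset\frac{\pi}{12}\Z$, which is \emph{forbidden} by the third property of $\rho$ in Lemma~\ref{l:rho}, or $\rho(\theta)+\rho(\theta')\in\frac{\pi}{k}+2\theta_a+\frac{2\pi}{k}\Z$, which is excluded for $\e$ small enough: indeed $|\rho-\mathrm{id}|<\e$ implies $|\rho(\theta)+\rho(\theta')-\theta-\theta'|\leq 2\e$, while $\theta_a\notin\frac{\pi}{48}\Z$ keeps the target coset at a positive distance from the finite set of values of $\theta+\theta'$. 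The one delicate sub-case is a pair like $(\theta,\theta')=(0,\pi)$ in scenario (A), where at $\rho=\mathrm{id}$ the minor $D_{11}D_{23}-D_{13}D_{21}$ itself vanishes; inspection reduces it to $2r_aD_{23}$ (because $D_{21}|_{\rho=\mathrm{id}}=0$), and the $D_{23}\neq 0$ argument just described closes the loop. The main obstacle I foresee is the bookkeeping: one must verify that every forbidden set of $\theta_a$'s generated along the way (multiples of $\pi/16,\pi/12,\pi/8,\ldots$) is contained in $\frac{\pi}{48}\Z$, so that the single hypothesis $\theta_a\notin\frac{\pi}{48}\Z$ dispatches all degeneracies uniformly.
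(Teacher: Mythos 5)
Your case split — scenario (A) where $D_{12}=0$ versus scenario (B) where $D_{11}=D_{13}=D_{21}=D_{23}=0$ — is a legitimate reorganization of the paper's sequential minor analysis, and the use of the structural identities $D_{31}=D_{11}$, $D_{33}=D_{13}$, $D_{32}=-D_{12}$ together with the third property of $\rho$ is the right idea. However, there is a genuine gap in scenario (A). There the one remaining rank-one condition is the \emph{single} minor equation $D_{11}D_{23}-D_{13}D_{21}=0$, not the pair of equations $D_{13}=0$ and $D_{23}=0$ that your final paragraph analyzes; those two only have to vanish separately in scenario (B). You acknowledge the minor in the last sentence but only treat the pair $(\theta,\theta')=(0,\pi)$, asserting without justification that it is ``the one delicate sub-case.'' At the other candidate pairs produced by (A) — for instance $(\pi/4,3\pi/4)$ or $(\pi/8,7\pi/8)$, where $\theta+\theta'=\pi$ and $\theta-\theta'\in\tfrac\pi4\Z\setminus\pi\Z$ — one has $D_{11}\neq0$ and $D_{21}\neq0$, so $D_{11}D_{23}=D_{13}D_{21}$ is a nontrivial relation between the two $\rho$-dependent entries and is not ruled out by knowing merely that $D_{13}\neq0$ and $D_{23}\neq0$.

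To close this you need the observation that, when $\theta-\theta'\in\tfrac\pi4\Z$, the restriction to $\rho=\mathrm{id}$ gives $D_{23}|_{\rho=\mathrm{id}}=-D_{12}=0$ and $D_{13}|_{\rho=\mathrm{id}}=D_{21}$, so the minor at $\rho=\mathrm{id}$ equals $-D_{21}^2$; this vanishes precisely when $\theta-\theta'\in\tfrac\pi3\Z\cap\tfrac\pi4\Z=\pi\Z$, i.e.\ exactly at $(0,\pi)$. For the remaining (finitely many) pairs the minor is bounded away from zero at $\rho=\mathrm{id}$, and since $|\rho-\mathrm{id}|<\e$ perturbs it by $O(\e)$, the connection is excluded for $\e$ small. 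This is precisely the content of the paper's Step~3, which you essentially skip: there one shows $|\sin(4(\rho(\theta)-\rho(\theta')))|\leq 4\e$, forcing $\sin^2(3(\theta-\theta'))\leq c_a\e$, hence $\theta-\theta'\in\pi\Z$. Finally, the compactness preamble to dispose of nearby pairs is a detour: it is never used in the subsequent algebra, and the claim that $\delta_0$ is independent of $\e$ is itself not immediate since $\Gamma_a$ (and $\Gamma_a'$) depend on $\rho$, hence on $\e$.
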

\begin{proof}
Note as in Lemma~\ref{l:Piaellipt} that the assumption $\theta_a\notin\frac{\pi}{48}\Z$ implies
\begin{align*}
\cos\lt(8\theta_a\rt)\not=0\quad\text{and}\quad  \cos\lt(6\theta_a\rt)\not=0.
\end{align*}
We assume that there exist
 $\theta\neq\theta'\in\R/2\pi\Z$ such that
\begin{align}
\label{eqb1}
\mathrm{Rank}\lt(\Gamma_a(\theta)-\Gamma_a(\theta')\rt)=1,
\end{align}
and we  obtain  a contradiction. We do this in several steps.

\textit{Step 1.} 
We have
\begin{align}
\label{eqc3}
\theta+\theta'\in\pi\Z
\quad\text{ and }
\quad
\theta-\theta'\in\frac\pi 3\Z \cup\frac\pi 4\Z.
\end{align}

\em Proof of Step 1. \rm
From \eqref{eqb22.4} calculating the $M_{12,12}$ minor we have 
\begin{align}
\label{eqb2}
0&= r_a^2\lt(\cos\lt(\theta\rt)-\cos\lt(\theta'\rt)\rt)\lt( \sin\lt(\theta \rt)-\sin\lt(\theta'\rt) \rt)\nn\\
&\quad+\lt( \sin\lt(8\theta -8\theta_a\rt)- \sin\lt(8\theta'-8\theta_a\rt) \rt) \lt( \sin\lt(6\theta -6\theta_a\rt)- \sin\lt(6\theta'-6\theta_a\rt) \rt)\nn\\
&=-4  r_a^2 \sin\lt(\frac{\theta +\theta'}{2}\rt)\cos\lt(\frac{\theta +\theta'}{2}\rt) \sin^2\lt(\frac{\theta -\theta'}{2}\rt)\nn\\
&\quad +4   \sin\lt(3\lt(\theta -\theta'\rt)\rt)\sin\lt(4\lt(\theta -\theta'\rt)\rt)\cos\lt(3\lt(\theta +\theta'\rt)-6 \theta_a\rt)\cos\lt(4\lt(\theta +\theta'\rt)-8 \theta_a\rt).
\end{align}
And calculating the  $M_{23,12}$ minor we have 
\begin{align}
\label{eqb50}
0&=- r_a^2\lt(\cos\lt(\theta\rt)-\cos\lt(\theta'\rt)\rt)\lt( \sin\lt(\theta \rt)-\sin\lt(\theta'\rt) \rt)\nn\\
&\quad+\lt( \sin\lt(8\theta -8\theta_a\rt)- \sin\lt(8\theta'-8\theta_a\rt) \rt) \lt( \sin\lt(6\theta -6\theta_a\rt)- \sin\lt(6\theta'-6\theta_a\rt) \rt)\nn\\
&=4  r_a^2  \sin\lt(\frac{\theta +\theta'}{2}\rt)\cos\lt(\frac{\theta +\theta'}{2}\rt) \sin^2\lt(\frac{\theta -\theta'}{2}\rt)\nn\\
&\quad +4   \sin\lt(3\lt(\theta -\theta'\rt)\rt)\sin\lt(4\lt(\theta -\theta'\rt)\rt)\cos\lt(3\lt(\theta +\theta'\rt)-6 \theta_a\rt)\cos\lt(4\lt(\theta +\theta'\rt)-8 \theta_a\rt).
\end{align}
Adding and substracting \eqref{eqb2} and \eqref{eqb50}   we obtain the   equations 
\begin{align}
\label{eqb51}
0= \sin\lt(3\lt(\theta -\theta'\rt)\rt)\sin\lt(4\lt(\theta -\theta'\rt)\rt)\cos\lt(3\lt(\theta +\theta'\rt)-6 \theta_a\rt)\cos\lt(4\lt(\theta +\theta'\rt)-8 \theta_a\rt)
\end{align}
and
\begin{align}
\label{eqb52}
0=  \sin\lt(\frac{\theta +\theta'}{2}\rt)\cos\lt(\frac{\theta +\theta'}{2}\rt) \sin^2\lt(\frac{\theta -\theta'}{2}\rt)
\end{align}
Since $\theta\neq\theta'$ in $\R/2\pi\Z$, the last factor of \eqref{eqb52} is nonzero, so either the first or the second must be zero. This implies $\theta +\theta'\in\pi\Z$.
As a consequence, the last two factors in \eqref{eqb51} are equal to $\pm\cos(6\theta_a)$ and $\cos(8\theta_a)$ and are nonzero by our choice of $a$. 
So one of the first two factors of \eqref{eqb51} must vanish, that is, 
$\theta -\theta'\in \frac\pi 3\Z\cup \frac\pi 4\Z$.

\medskip

\textit{Step 2.} We have $\theta-\theta'\in \frac{\pi}{4}\mathbb{Z}$. \newline 
\em Proof of Step 2. \rm 
Considering the $M_{13,23}$ minor of $\Gamma_a(\theta)-\Gamma_a(\theta')$ we obtain the equation 
\begin{align*}
0&=\lt(  \sin\lt(8\theta-8\theta_a\rt)  - \sin\lt(8\theta'-8\theta_a\rt) \rt)\nn\\
&\quad\quad\quad\times \lt( \sin\lt(6\rho\lt(\theta \rt)-6\theta_a\rt)  - \sin\lt(6\rho\lt(\theta'\rt) -6\theta_a\rt) \rt) \nn\\
&=4\sin\lt(4\lt(\theta -\theta'\rt)\rt) \cos\lt(4\lt(\theta +\theta'\rt)-8 \theta_a\rt)\nn\\
&\quad\quad \quad \times \sin\lt(3\lt(\rho\lt(\theta \rt)-\rho\lt(\theta'\rt)\rt)\rt)
\cos\lt(3\lt( \rho\lt(\theta \rt)+\rho\lt(\theta'\rt)\rt) -6 \theta_a \rt)
\end{align*}
From Step 1 we have $\theta +\theta'\in\pi\Z$ so the second factor is $\cos(8\theta_a)\neq 0$. The last factor is arbitrarily close to $\pm\cos(6 \theta_a)\neq 0$ since $|\rho-id|\leq\e$. The third factor is nonzero by construction of $\rho$ because $\theta,\theta'\in\frac\pi{24}\Z$ by  \eqref{eqc3}. So we must have $\sin(4(\theta-\theta'))=0$ hence $\theta-\theta'\in\frac\pi 4\Z$.

\medskip

\textit{Step 3.} We have $\theta-\theta'\in \pi\mathbb{Z}$. \newline 
\em Proof of Step 3. \rm Considering the $M_{12,13}$ minor of $\Gamma_a(\theta)-\Gamma_a(\theta')$ we obtain the equation
\begin{align*}
0&=r_a\lt(\cos\lt(\theta\rt)- \cos\lt(\theta'\rt)  \rt)\lt( \sin\lt(8\rho\lt(\theta \rt)-8\theta_a\rt)  - \sin\lt(8\rho\lt(\theta'\rt) -8\theta_a\rt) \rt)   \nonumber\\
&\quad\quad- \lt( \sin\lt(6\theta-6\theta_a\rt)  - \sin\lt(6\theta' -6\theta_a\rt) \rt) \nn\\
&\quad \quad\quad\quad \times \lt( \sin\lt(6\rho\lt(\theta \rt)-6\theta_a\rt)  - \sin\lt(6\rho\lt(\theta'\rt) -6\theta_a\rt) \rt),
\end{align*}
which we rewrite as
\begin{align}
\label{eqc4}
& -4r_a\sin\lt(\frac{\theta +\theta'}{2}\rt)\sin\lt(\frac{\theta-\theta'}{2}\rt)\nn\\
&\quad\quad\quad\times \sin\lt(4\lt(\rho\lt(\theta \rt)-\rho\lt(\theta'\rt)\rt)\rt) \cos\lt(4\lt(\rho\lt(\theta \rt)+\rho\lt(\theta'\rt)\rt)-8 \theta_a\rt)\nn\\
&= 4\sin\lt(3\lt(\theta -\theta'\rt)\rt)\cos\lt(3\lt( \theta +\theta'\rt) -6 \theta_a\rt)\nn\\
&\quad\quad\quad\times\sin\lt(3\lt(\rho\lt(\theta \rt)-\rho\lt(\theta'\rt)\rt)\rt)
\cos\lt(3\lt( \rho\lt(\theta \rt)+\rho\lt(\theta'\rt) -6 \theta_a\rt)\rt).
\end{align}
Since $\theta -\theta'\in\frac\pi 4\Z$ and $|\rho-id|\leq\e$, the third factor in the left-hand side of \eqref{eqc4} has absolute value $\leq 4\e$.
Since $\theta+\theta'\in\pi\Z$, the second factor in the right-hand side of \eqref{eqc4} has absolute value equal to $ | \cos(6\theta_a)|>0$, and for small enough $\e$ the absolute value of the last factor is $\geq |\cos(6\theta_a)|/2>0$.
Taking also into account that the first and third factor in the right-hand side of \eqref{eqc4} differ from  each other by  an error $\leq 3\e$, we must have
\begin{align*}
\sin^2\lt(3\lt(\theta -\theta'\rt)\rt)\leq c_a \e,
\end{align*}
for some $c_a>0$ depending only on $a$.
Because $\theta-\theta'\in\frac\pi 4\Z$ by Step~2, provided $\e$ is chosen small enough, this implies $\theta-\theta'\in \frac\pi 3\Z\cap\frac\pi 4\Z =\pi\Z$.

\textit{Step 4:  Conclusion.}
 From Step~1 and Step~3 we have $\theta+\theta',\theta-\theta'\in\pi\Z$, so $\theta,\theta'\in\pi\Z$. Since me may without loss of generality exchange the roles of $\theta$ and $\theta'$ and choose arbitrary representants in $\R/2\pi\Z$, this amounts to $\theta=0$ and $\theta'=\pi$.
Considering the $M_{12,13}$ minor of $\Gamma_a(0)-\Gamma_a(\pi)$ as in \eqref{eqc4} we deduce
\begin{align*}
\sin(4(\rho(\pi)-\rho(0))\cos(4(\rho(\pi)+\rho(0))+8\theta_a)=0.
\end{align*}
Using again that $|\rho- id|\leq\e$, the second factor has absolute value $\geq |\cos(8\theta_a)|/2$ provided $\e$ is small enough, and the first factor is nonzero by construction of $\rho$, so we conclude that \eqref{eqb1} is not possible for $\theta\neq\theta'\in\R/2\pi\Z$.
\end{proof}

\bibliographystyle{acm}
\bibliography{rigid}

\begin{thebibliography}{10}

\bibitem{AIM09}
{\sc Astala, K., Iwaniec, T., and Martin, G.}
\newblock {\em Elliptic partial differential equations and quasiconformal
  mappings in the plane}, vol.~48 of {\em Princeton Math. Ser.}
\newblock Princeton, NJ: Princeton University Press, 2009.

\bibitem{BFJK94}
{\sc Bhattacharya, K., Firoozye, N.~B., James, R.~D., and Kohn, R.~V.}
\newblock Restrictions on microstructure.
\newblock {\em Proc. R. Soc. Edinb., Sect. A, Math. 124}, 5 (1994), 843--878.

\bibitem{CM04}
{\sc Chaudhuri, N., and Müller, S.}
\newblock Rigidity estimate for two incompatible wells.
\newblock {\em Calc. Var. Partial Differential Equations 19}, 4 (2004),
  379--390.

\bibitem{degiorgi}
{\sc De~Giorgi, E.}
\newblock Sulla differenziabilit\`a e l'analiticit\`a delle estremali degli
  integrali multipli regolari.
\newblock {\em Mem. Accad. Sci. Torino. Cl. Sci. Fis. Mat. Nat. (3) 3\/}
  (1957), 25--43.

\bibitem{DS09}
{\sc De~Lellis, C., and Sz\'{e}kelyhidi, L.}
\newblock Simple proof of two-well rigidity.
\newblock {\em C. R. Math. Acad. Sci. Paris 343}, 5 (2006), 367--370.

\bibitem{FS08}
{\sc Faraco, D., and Sz\'{e}kelyhidi, L.}
\newblock Tartar's conjecture and localization of the quasiconvex hull in
  {$\mathbb R^{2\times 2}$}.
\newblock {\em Acta Math. 200}, 2 (2008), 279--305.

\bibitem{FZ05}
{\sc Faraco, D., and Zhong, X.}
\newblock Geometric rigidity of conformal matrices.
\newblock {\em Ann. Sc. Norm. Super. Pisa Cl. Sci. (5) 4}, 4 (2005), 557--585.

\bibitem{FJM02}
{\sc Friesecke, G., James, R.~D., and M\"{u}ller, S.}
\newblock A theorem on geometric rigidity and the derivation of nonlinear plate
  theory from three-dimensional elasticity.
\newblock {\em Comm. Pure Appl. Math. 55}, 11 (2002), 1461--1506.

\bibitem{FJM06}
{\sc Friesecke, G., James, R.~D., and M\"{u}ller, S.}
\newblock A hierarchy of plate models derived from nonlinear elasticity by
  gamma-convergence.
\newblock {\em Arch. Ration. Mech. Anal. 180}, 2 (2006), 183--236.

\bibitem{GT}
{\sc Gilbarg, D., and Trudinger, N.~S.}
\newblock {\em Elliptic partial differential equations of second order}.
\newblock Classics in Mathematics. Springer-Verlag, Berlin, 2001.
\newblock Reprint of the 1998 edition.

\bibitem{hanlin}
{\sc Han, Q., and Lin, F.}
\newblock {\em Elliptic partial differential equations}, vol.~1 of {\em Courant
  Lecture Notes in Mathematics}.
\newblock New York University, Courant Institute of Mathematical Sciences, New
  York; American Mathematical Society, Providence, RI, 1997.

\bibitem{J61}
{\sc John, F.}
\newblock Rotation and strain.
\newblock {\em Comm. Pure Appl. Math. 14}, 1 (1961), 391--413.

\bibitem{K03}
{\sc Kirchheim, B.}
\newblock {\em Rigidity and Geometry of Microstructures}.
\newblock MIS-MPG lecture note 16/2003, 2003.

\bibitem{K82}
{\sc Kohn, R.}
\newblock New integral estimates for deformations in terms of their nonlinear
  strains.
\newblock {\em Arch. Rational Mech. Anal. 78}, 2 (1982), 131--172.

\bibitem{LLP20}
{\sc Lamy, X., Lorent, A., and Peng, G.}
\newblock Rigidity of a non-elliptic differential inclusion related to the
  {A}viles-{G}iga conjecture.
\newblock {\em Arch. Ration. Mech. Anal. 238}, 1 (2020), 383--413.

\bibitem{LIO50}
{\sc Liouville, J.}
\newblock Theoreme sur l’equation $dx^2+dy^2+dz^2=\lm (d\alpha^2+d\beta^2+d
  \gamma^2)$.
\newblock {\em J. Math. Pures Appl 1}, 15 (1850).

\bibitem{M92}
{\sc Matos, J.~P.}
\newblock Young measures and the absence of fine microstructures in a class of
  phase transitions.
\newblock {\em European J. Appl. Math. 3}, 1 (1992), 31--54.

\bibitem{RES67}
{\sc Resetnjak, J.}
\newblock Liouville's conformal mapping theorem under minimal regularity
  hypotheses.
\newblock {\em Sibirsk. Mat. Z. 8\/} (1967), 835--840.

\bibitem{reshetnyak94}
{\sc Reshetnyak, Y.~G.}
\newblock {\em Stability theorems in geometry and analysis. {Translated} from
  the {Russian} by {N}. {S}. {Dairbekov} and {V}. {N}. {Dyatlov}. {Revised} and
  updated translation}, rev. and updated transl.~ed., vol.~304 of {\em Math.
  Appl., Dordr.}
\newblock Dordrecht: Kluwer Academic Publishers, 1994.

\bibitem{smith70}
{\sc Smith, K.~T.}
\newblock Formulas to represent functions by their derivatives.
\newblock {\em Math. Ann. 188\/} (1970), 53--77.

\bibitem{sverak93}
{\sc \v{S}ver\'{a}k, V.}
\newblock On {T}artar's conjecture.
\newblock {\em Ann. Inst. H. Poincar\'{e} C Anal. Non Lin\'{e}aire 10}, 4
  (1993), 405--412.

\bibitem{zhang97}
{\sc Zhang, K.}
\newblock On connected subsets of {{\(M^{2\times 2}\)}} without rank-one
  connections.
\newblock {\em Proc. R. Soc. Edinb., Sect. A, Math. 127}, 1 (1997), 207--216.

\end{thebibliography}

\end{document}